\newcommand{\grado}    {\sfg\sfr}
\newcommand{\id}       {\mathrm{id}}
\newcommand{\tabr}     {\leftarrow}
\newcommand{\notcomparable} {not $\tabr$\,comparable\xspace}
\newcommand{\monomialOrder}{<_t}
\newcommand{\newMonomialOrder}{\prec_t}
\newcommand{\Rich}     {\calR}
\newcommand{\tom}      {{\tilde \omega}} 
\theoremstyle{plain}       
\newtheorem{lem}{Lemma}
\newtheorem{teo}[lem]{Theorem}
\newtheorem{cor}[lem]{Corollary}
\theoremstyle{definition}
\newtheorem{oss}[lem]{Remark}
\theoremstyle{remark}
\newcommand{\mA}{\mathbb A} 
 \newcommand{\mC}{\mathbb C}  
 \newcommand{\mF}{\mathbb F} \newcommand{\mG}{\mathbb G} 
  \newcommand{\mM}{\mathbb M} 
\newcommand{\mN}{\mathbb N}  \newcommand{\mP}{\mathbb P} 
  \newcommand{\mS}{\mathbb S}
\newcommand{\mZ}{\mathbb Z}
 \newcommand{\calF}{\mathcal F} 
 \newcommand{\calL}{\mathcal L} 
 \newcommand{\calO}{\mathcal O} 
 \newcommand{\calR}{\mathcal R} \newcommand{\calS}{\mathcal S}
\newcommand{\goK}{\mathfrak K}
\newcommand{\gob}{\mathfrak b}   
  \newcommand{\gog}{\mathfrak g}
\newcommand{\got}{\mathfrak t}
\newcommand{\sfA}{\mathsf A} 
\newcommand{\sfB}{\mathsf B} \newcommand{\sfC}{\mathsf C} \newcommand{\sfD}{\mathsf D}
  \newcommand{\sfS}{\mathsf S}
 \newcommand{\sff}{\mathsf f} \newcommand{\sfg}{\mathsf g}
 \newcommand{\sfr}{\mathsf r}
\newcommand{\gra}{\alpha} \newcommand{\grb}{\beta}       \newcommand{\grg}{\gamma}
\newcommand{\grd}{\delta} \newcommand{\gre}{\varepsilon} \newcommand{\grz}{\zeta}
 \newcommand{\grl}{\lambda}     
\newcommand{\grf}{\varphi}      
\newcommand{\grG}{\Gamma} \newcommand{\grD}{\Delta}
\newcommand{\grL}{\Lambda} 
\newcommand{\mi}  {\imath}
\newcommand{\mk}  {\Bbbk}
\newcommand{\lra}       {\longrightarrow}
\newcommand{\isocan}    {\simeq}
\newcommand{\cech}      {\spcheck}
\renewcommand{\geq}     {\geqslant}
\renewcommand{\leq}     {\leqslant}
\newcommand{\senza}     {\smallsetminus}
\newcommand{\ristretto} {\bigr|}
\newcommand{\hypOne}    {{\sf Hp1}}
\newcommand{\hypTwo}    {{\sf Hp2}}
\newcommand{\hypThree}  {{\sf Hp3}}
\newcommand{\hypFour}   {{\sf Hp4}}
\newcommand{\hypFive}   {{\sf Hp5}}
\newcommand{\hypSix}    {{\sf Hp6}}
\newcommand{\symOne}    {{\sf Sym1}}
\newcommand{\symTwo}    {{\sf Sym2}}
\newcommand{\modOne}    {{\sf Mod1}}
\newcommand{\modTwo}    {{\sf Mod2}}
\newcommand{\modThree}  {{\sf Mod3}}
\newcommand{\sphOne}    {{\sf Sph1}}
\newcommand{\sphTwo}    {{\sf Sph2}}
\newcommand{\sphThree}  {{\sf Sph3}}
\newcommand{\GL}        {{\sf GL}}
\newcommand{\SL}        {{\sf SL}}
\newcommand{\SO}        {{\sf SO}}
\newcommand{\Sp}        {{\sf Sp}}
            \newcommand{\st}       {\, : \,}
         \newcommand{\mand}     {\text{ and }}
\newcommand{\tableau}[2]{\begin{smallmatrix}#1\\#2\end{smallmatrix}}
\newcommand{\tableaun}[3]{\begin{smallmatrix}#1 & #2\\#3\end{smallmatrix}}
\definecolor{revisionColor}{rgb}{0.0,0.0,0.0}
\newcommand{\rev}[1]{{\color{revisionColor}{#1}}}
\DeclareMathOperator{\length}{length}
\title[Equations of model varieties]{Pl{\"u}cker relations and spherical varieties: application to model varieties}
\author{Rocco Chiriv\`\i\ and Andrea Maffei}
\subjclass[2010]{Primary 14M27; Secondary 22E46}
\keywords{Spherical variety, model variety, standard monomial theory}
\address{Dipartimento di Matematica e Fisica ``Ennio De Giorgi''\\ Universit\`a del Salento\\ Via per Arnesano\\ 73047 Monteroni di Lecce (LE)\\ Italy}
\email{rocco.chirivi@unisalento.it}
\address{Dipartimento di Matematica\\ Universit\`a di Pisa\\ Largo Bruno Pontecorvo 5\\ 56127 Pisa (PI)\\ Italy}
\email{maffei@dm.unipi.it}
\begin{document}

\begin{abstract}
A general framework for the reduction of the equations defining classes of spherical varieties to (maybe infinite dimensional) grassmannians is proposed. This is applied to model varieties of type $\sfA$, $\sfB$ and $\sfC$; in particular a standard monomial theory for these varieties is presented.
\end{abstract}

\maketitle

\section{Introduction}\label{sec:introduction}

A standard monomial theory for an algebra $A$ over a field $\mk$ is given by a set of generators $\mathbb{A}$, together with a notion of standardness for monomials in $\mathbb{A}$ such that $A$ is spanned by standard monomials as a $\mk$--vector space; further the relations in $A$ writing non standard monomials in terms of standard ones, called straightening relations, are ``upper triangular'' (see Section \ref{sec:SMT} for details). One of the main purpose of standard monomial theory is to replace the knoweldge of the equations defining a variety by the order requirement in the straightening relations. Indeed\rev{,} in many situation\rev{s,} this weaker property is enough to prove geometric results like normality, Cohen-Maculay property, degeneration results and others. Moreover\rev{,} the order structure of the straightening relations allows to prove that they are generators for the ideal defining the algebra $A$ as a quotient of the symmetric algebra $\sfS(\mathbb{A})$. The first example of such a theory dates back to Hodge study in \cite{hodge} of the grassmannian of $k$--spaces in a $n$--dimensional vector space.

A standard monomial theory for flag and Schubert varieties has been developed over the years by Lakshmibai, Musili and Seshadri beginning with \cite{LMSBull}, this program culminated in the work  of Littelmann \cite{L:SMT} (see also \cite{chirivi}) where such a theory is defined in the generality of symmetrizable Kac-Moody groups.

At the same time, in \cite{DEP} a standard monomial theory for the coordinate ring of $\SL_n$ was reduced to that of the grassmannian of $n$--spaces in a $2n$--dimensional space. Next this result was generalized in various directions by many authors (see the introduction in \cite{CLM} for further details). In the paper \cite{CLM} the authors and Peter Littelmann shown how a standard monomial theory for certain classes of symmetric varieties may be described in terms of the Pl\"ucker relations of suitable, maybe infinite dimensional, grassmannians. Moreover all previous known cases of this type of reduction are particular instances of our construction for symmetric varieties.

The first purpose of the present paper is the development of a general framework for this reduction from the coordinate ring of a variety to the coordinate ring of a grassmannian. We propose how a suitable grassmannian for such process may be defined if we start with a spherical variety. However, this proposal does not work in general for all spherical varieties, indeed various technical hypotheses must be met. \rev{Anyway,} it is quite general and the hypotheses are fulfilled in many interesting cases. Let us explains our approach in more details.

Let $G$ be a semisimple and connected algebraic group and let $H$ be an algebraic subgroup such that $X \rev{\doteq} G/H$ is spherical. Let $\Lambda$ be the weight lattice and $\Lambda^+$ the monoid of dominant weights with respect to a fixed maximal torus and Borel subgroup of $G$. Denote by $\Omega^+$ the monoid of spherical weights, i.e. of dominant weights $\lambda$ such that the $H$--invariant subspace $V_\lambda^H$ of the $G$--irreducible module $V_\lambda$ of highest weight $\lambda$ is non\rev{-}zero. Our first hypotheses is that $\Omega^+$ is a free monoid; let $\gre_1,\gre_2,\ldots,\gre_\ell$ be generators for $\Omega^+$. Then the coordinate ring $A$ of $X$ is generated by $V_{\gre_1}^*, V_{\gre_2}^*,\ldots,V_{\gre_\ell}^*$. Our aim is to construct a standard monomial theory having as generators a basis of $V_{\gre_1}^*\oplus V_{\gre_2}^*\oplus\cdots\oplus V_{\gre_\ell}^*$.

The main request is the existence of a Kac-Moody group $K$ such that $G$ is the semisimple part of a Levi of $K$ with the following properties. There exists a suitable grassmannian $\mathcal{F}$ for $K$, a $G$--invariant Richardson variety $\mathcal{R}\subset\mathcal{F}$ and a line bundle $\calL$ on $\mathcal{F}$ such that $X$ may be embedded in a completion of $\mathcal{F}$, \rev{moreover} $H^0(\mathcal{R},\calL)\simeq V_{\gre_1}^*\oplus V_{\gre_2}^*\oplus\cdots\oplus V_{\gre_\ell}^*$ and $\oplus_{n\geq0} H^0(\mathcal{R},\calL^n)$ is isomorphic to the coordinate ring of \rev{$X$} as $G$--modules. In a way, this group $K$ is a bigger group of ``hidden'' symmetries for $X$.

Further we require the existence of an additive map $\grado:\Omega^+\longrightarrow\mN$ such that the following compatibility with tensor product of spherical modules holds: for all $\mu,\lambda,\nu\in\Omega^+$ such that $V_\nu^*\subset V_\lambda^*\otimes V_\mu^*$ we have $\grado(\nu)\leq\grado(\lambda+\mu)$. We require also that the generators $\gre_1,\gre_2,\ldots,\gre_\ell$ are linearly ordered by $\grado$. Finally a certain compatibility between the function $\grado$ and the multiplication of sections in $\Gamma(\mathcal{F},\mathcal{L})$ is required. 

Once all such hypotheses are fulfilled we are able to prove that the relations among a basis of $V_{\gre_1}^*\oplus V_{\gre_2}^*\oplus\cdots\oplus V_{\gre_\ell}^*$ may be described in terms of the Pl\"ucker relations of $\mathcal{F}$, and for this reason we call the above general framework \emph{pl\"uckerization} for $X$. Further a standard monomial theory for $X$ may be described in terms of the standard monomial theory of $\mathcal{R}$, see Theorem \ref{teo:generalResult}. Using this we see that $X$ degenerates to $\mathcal{R}$ in a $G$--equivariant flat way, see Corollary \ref{cor:generalDegeneration}.

The construction of $K$, $\calF$, $\calR$, $\grado$,... follows an empirical recipe. The main ingredients for this construction are suggested by the moltiplication rule of the modules $V_{\gre_1}^*,V_{\gre_2}^*,\ldots,V_{\gre_\ell}^*$: see \symOne,\ \symTwo,\ \modOne,\ \modTwo,\ \modThree\ in Section \ref{sec:modello} and \sphOne,\ \sphTwo,\ \sphThree\ in Section \ref{sec:another}. Once these objects are defined the verifications of the above technical hypotheses are very uniform for the different varieties in the applications. In particular this recipe hints how many nodes to add to the Dynkin diagram of $G$ in order to obtain $K$; for symmetric varieties just one node while for model varieties and another class of spherical varieties (see below) two nodes are needed.

Our previous paper \cite{CLM} with Peter Littelmann follows the above general framework applying it to certain classes of symmetric varieties. Notice\rev{,} however\rev{,} that in that paper the proof of the existence of a standard monomial theory derived by that of the bigger group $K$ is wrong; there we tacitly assumed that a certain map is $G$--equivariant \rev{while} this is not the case in general (see \rev{R}emark \ref{remark_controesempio} below). However\rev{,} Theorem \ref{teo:generalResult} of this paper amends that gap.

The second aim of \rev{the present} paper is the application of the above described framework to model varieties of type $\sfA$, $\sfB$ and $\sfC$. A homogeneous model variety for a semisimple group $G$ is a homogeneous quasi--affine variety whose regular function ring is the sum of all irreducible representations of $G$ with multiplicity one. These varieties were introduced by Bernstein, Gelfand and Gelfand in \cite{Gel3} (see also Luna's paper \cite{Luna}) and studied by Gelfand and Zelevinsky in \cite{Gel2} and \cite{Gel1}. In particular for a homogeneous model variety $G/H$ we have $\Omega^+=\Lambda^+$.

In the cited papers the authors provided an embedding of the model varieties for classical groups as an open subset of a grassmannian of a bigger finite dimensional group; hence there are some similarities with our program. From the geometrical viewpoint\rev{,} the construction of Gelfand and Zelevinsky is more natural than our approach. However\rev{,} their embedding is not suitable for the application to the standard monomial theory having as generators a basis of $V_{\gre_1}^*\oplus V_{\gre_2}^*\oplus\cdots\oplus V_{\gre_\ell}^*$. Indeed\rev{,} it is for this purpose that we need to use a more complicated infinite dimensional grassmannian for model varieties of type $\sfB$ and $\sfC$. The two approaches coincide for the model variety of type $\sfA$ for which we use a finite dimensional lagrangian grassmannian.

Finally we present a further application of our framework to another class of spherical varieties. For this example the recipe for the construction of $K$ is a bit different of the above reported one; we have included this class of varieties as an illustration of how our program may be applied in other cases.

The paper is organized as follows. In Section \ref{sec:simboli} we introduce some general notations and symbols we use in the other sections. In Section \ref{sec:SMT} we review the standard monomial theory structure for flag, Schubert and Richardson varieties. Moreover we introduce the class of ridge Richardson varieties which plays a fundamental role in our theory. In Section \ref{sec:pluckerization} we present a general framework for the reduction of the equations of certain varieties to those of a suitable grassmannian. In the next Section \ref{sec:modello} we apply this construction to the case of model varieties of type $\sfA$, $\sfB$ and $\sfC$. Finally, as an example of possible further applications, we see in Section \ref{sec:another} how our theory may be applied to another class of spherical varieties.

\vskip 0.5cm

\noindent{\bf Acknowledgements.} We would like to thank Paolo Bravi for useful conversations.

\section{General assumptions, conventions and notations}\label{sec:simboli}

All groups, varieties and ind-varieties are over a fixed algebraically closed field $\mk$ of characteristic zero.

We denote by $K$ a Kac-Moody group constructed as in Kumar's book \cite{Kumar} ch. VI, and we denote by $\goK$ its Lie algebra; we always assume that $K$ is symmetrizable. Further we use the following notations and make the following assumptions:

\begin{itemize}
\item $T_K\subset B_K\subset K$ are a maximal torus and a Borel subgroup of $K$, respectively, and $\got_K\subset \gob_K\subset\goK$ are their Lie algebras;
\item $\Lambda_K$ is the character lattice of $T_K$ and $\Phi_K\subset\Lambda_{\rev{K}}$ is the set of roots;
\item $\Delta_K\subset \Phi_K$ is the set of simple roots determined by $B_K$ and for each $\gra\in \Delta_K$ we denote by $\gra\cech\in \got_K$ the corresponding coroot;
\item \rev{we denote by $<$ the dominant order on $\Lambda_K$;}
\item we assume that there exists a set of fundamental weights in $\Lambda_K$: that is a set $\{\tom_\gra:\gra\in\Delta_K\}$ such that $\langle\gra\cech;\tom_\grb\rangle=\grd_{\gra\grb}$ for all $\gra,\grb \in \Delta_K$ (in Kumar's construction in \cite{Kumar} such a set always exists);
\item for all $\grl,\mu\in \Lambda_K$ we write $\grl\equiv\mu$ if $\langle\gra\cech;\grl\rangle = \langle\gra\cech;\mu\rangle$ for all $\gra\in\Delta_K$;
\item $W_K$ denotes the Weyl group of $K$ w.r.t.\ $T_K$, and, for a real root $\alpha\in\Phi_K$, we denote by $s_\alpha$ the reflection defined by $\alpha$;
\item for a standard Levi $L$ relative to $T_K, B_K$, we set $T_L\doteq T_K$, $B_L\doteq B_K\cap L$, hence $\Delta_L\subset\Delta_K$, $W_L\subset W_K$; further $W^L\subset W_K$ denotes the set of minimal representatives of $W_K/W_L$;
\item for a parabolic $P\supset B_K$, we define $W_P\doteq W_L$, $W^P\doteq W^L$ where $L$ is the standard Levi of $P$,
\item for $u\in W_K$, $\length(u)$ is the length of $u$ with respect to $\Delta_K$.
\end{itemize}

\section{Standard monomial theory}\label{sec:SMT}

We recall the definition of a standard monomial theory. 

Let $A$ be a commutative $\mk$--algebra\rev{,} $\mA$ a finite subset of $A$ and let $\tabr$ be a transitive antisymmetric binary relation ({\it t.a.b.r.} for short) on $\mA$. (Note, $\tabr$ is not necessarily reflexive.)  If $a_1 \tabr a_2 \tabr \cdots \tabr a_n$, then we say that the monomial $a_1 a_2 \cdots a_n$ is a \emph{standard monomial}. We denote by $\mS\mM(\mA)$ the set of all standard monomials. We say that $(\mA, \tabr)$ is a standard monomial theory (for short SMT) for $A$ if $\mS\mM(\mA)$ is a $\mk$--basis of $A$.

The construction of a standard monomial theory often comes together with the description of the straightening relations, i.e. a set of relations in the elements of $\mA$ which provide an inductive procedure to rewrite a non standard monomial as a linear combination of standard monomials. Let us explain this in more details.

Let $(\mA, \tabr)$ be a SMT for the ring $A$. In particular, $\mA$ generates $A$ and we denote by $Rel_A$ the kernel of the natural morphism from the symmetric algebra $\sfS(\mA)$ to $A$. Let $\mM(\mA)\subset\sfS(\mA)$ be the set of all monomials in the set of generators $\mA$ and let $\monomialOrder$ be a monomial order which refines the t.a.b.r. on $\mA$. (We recall that a monomial order is a total order on the set of monomials such that: (i) if $m,m',m''$ are monomials and $m'\monomialOrder m''$ then $mm'\monomialOrder mm''$ and (ii) $1\monomialOrder m$ for all monomials $m\neq 1$ (see \cite{Eisenbud}, section 15.2).) Let us assume, for any two $a, a' \in \mA$ which are \notcomparable, that there exists a relation $R_{a,a'} \in Rel_A$ such that
$$
R_{a,a'}=a\,a' - P_{a,a'}
$$
and $P_{a,a'}$ is a sum of monomials which are strictly smaller than $a\,a'$ with respect to the order $\monomialOrder$. A set of relations satisfying these properties is called a set of \emph{straightening relations}. We have the following simple lemma (see \cite{CLM} Lemma 10).

\begin{lem}\label{lem:SMT}
Let $(\mA, \tabr)$ be a SMT for the ring $A$ and let $\calR \rev{\doteq} \{R_{a,a'}\st a,a' \in \mA$ are \notcomparable$\}$ be a set of straightening relations. Then $\calR$ generates $Rel_A$.
\end{lem}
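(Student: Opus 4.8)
The plan is to show that the straightening relations $\calR$ generate $Rel_A$ by a standard Gröbner-basis-style argument, using the monomial order $\monomialOrder$ that refines $\tabr$. The key observation is that the straightening relations allow us to rewrite any non-standard monomial as a $\mk$-linear combination of strictly smaller monomials, and since $\mS\mM(\mA)$ is a basis, this process must terminate at a combination of standard monomials.

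First I would argue that every monomial $m\in\mM(\mA)$ is congruent modulo the ideal $(\calR)$ generated by $\calR$ to a $\mk$-linear combination of standard monomials. If $m$ is standard there is nothing to do. Otherwise $m$ contains a sub-product $a\,a'$ with $a,a'$ \notcomparable; replacing $a\,a'$ by $P_{a,a'}$ using $R_{a,a'}\in\calR$ yields $m\equiv m'\pmod{(\calR)}$ where $m'$ is a $\mk$-linear combination of monomials each strictly $\monomialOrder$-smaller than $m$ (here one uses property (i) of a monomial order, so that multiplying $R_{a,a'}$ by the complementary factor of $m$ preserves the strict inequality term by term). Since $\monomialOrder$ is a monomial order, it is a well-order on $\mM(\mA)$ — property (ii) guarantees $1$ is the minimum and there are no infinite strictly descending chains — so by Noetherian induction on $\monomialOrder$ every monomial is $\equiv$ (mod $(\calR)$) to a linear combination of standard monomials. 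Consequently $\sfS(\mA) = (\calR) + \mathrm{span}_\mk\,\mS\mM(\mA)$.

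Next I would conclude by comparing with $Rel_A$. Clearly $(\calR)\subseteq Rel_A$ since each $R_{a,a'}\in Rel_A$ and $Rel_A$ is an ideal. Suppose $f\in Rel_A$. By the previous step, $f\equiv g \pmod{(\calR)}$ for some $g\in\mathrm{span}_\mk\,\mS\mM(\mA)$; since $(\calR)\subseteq Rel_A$, we get $g\in Rel_A$, i.e.\ $g$ maps to $0$ in $A$. But the images of the standard monomials form a $\mk$-basis of $A$, hence are linearly independent, so $g=0$ in $\sfS(\mA)$. Therefore $f\in(\calR)$, proving $Rel_A=(\calR)$.

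The only slightly delicate point is the termination of the rewriting procedure: one must be sure that replacing $a\,a'$ by $P_{a,a'}$ inside $m$ genuinely produces monomials strictly smaller than $m$ in $\monomialOrder$, which is exactly the compatibility of $\monomialOrder$ with multiplication together with the defining property $P_{a,a'} \prec a\,a'$ (termwise); combined with the well-ordering this gives a finite process. Everything else is formal, and this is the content of \cite{CLM} Lemma 10.
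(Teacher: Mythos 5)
Your argument is correct and is essentially the proof the paper relies on: the paper does not reprove the lemma but cites \cite{CLM}, Lemma~10, whose content is exactly this rewriting--plus--well-ordering argument (reduce any non-standard monomial modulo the ideal $(\calR)$ to $\monomialOrder$-smaller monomials, conclude by induction that $\sfS(\mA)=(\calR)+\mathrm{span}_\mk\,\mS\mM(\mA)$, and use linear independence of the images of standard monomials in $A$ to get $Rel_A=(\calR)$). The only cosmetic point is that the well-ordering of $\monomialOrder$ follows from the finiteness of $\mA$ via Dickson's lemma (see \cite{Eisenbud}, Section~15.2) rather than from property (ii) alone, and one should note that a non-standard monomial necessarily contains a \notcomparable pair (possibly a repeated factor $a$ with $a\not\tabr a$), but neither remark affects the validity of your proof.
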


\subsection{Standard monomial theory for flag and Schubert varieties} \label{ssec:SMTflag}

Let $A$ be the coordinate ring of the cone over a generalized flag variety $\calF$ of a symmetrizable Kac-Moody group $K$.  For this type of algebras a standard monomial theory has been constructed in \cite{L:SMT} and \cite{chirivi}. We recall the main properties of this SMT. 

\rev{We fix} an ample line bundle $\calL$ over $\calF$ and consider the ring $\rev{\grG_\calF}\rev{\doteq}\bigoplus _{n\geq 0} \grG(\calF,\calL^{n})$.  A basis $\rev{\mF}$ of $\rev{\grG_\calF}$ has been constructed in \cite{L:SMT} together with a t.a.b.r. $\tabr$ on this set such that $(\rev{\mF}, \tabr)$ is a SMT for $\rev{\grG_\calF}$.

We denote by $\rev{\mS\mM(n)}$ the set of standard monomials of degree $n$, by $\rev{\mS\mM}$ the set of all standard monomials and by $\rev{\mM}$ the set of all monomials in the set of generators $\rev{\mF}$.  For $f,f' \in\rev{\mF}$ which are \notcomparable, the product $f\,f'$ can be expressed as a sum $P_{f,f'}$ of standard monomials of degree two. In \cite{chirivi} a total order $\monomialOrder$ has been introduced on $\rev{\mM}$ with the properties required in the previous discussion of a general SMT, so that the relations $R_{f,f'}=f\,f' -P_{f,f'}$ form a set of straightening relations. These relations are called Pl\"ucker relations since they generalize the usual Pl\"ucker relations for the Grassmannian.

Furthermore, this theory is adapted to Schubert varieties. Indeed let $\calS\subset\calF$ be a Schubert variety, i.e. a closed $B_K$--stable subvariety, and set $\rev{\grG_\calS} \rev{\doteq}\bigoplus_{n\geq 0} \grG(\calS,\calL^n)$. Denote by $r :\rev{\grG_\calF} \lra \rev{\grG_\calS}$ the restriction map, let $I_\calS$ be its kernel and define $\rev{\mF_\calS} \doteq \{a \in \rev{\mF} \st r(a)\neq 0\}$. Then the set $\{r(a) \st a \in \rev{\mF_\calS}\}$ with the t.a.b.r.\ induced by the t.a.b.r.\ $\tabr$ of $\mF_\calS$ realizes a SMT for $\rev{\grG_\calS}$ and the monomials $m \in \rev{\mS\mM}$ which contain elements not in $\rev{\mF_\calS}$ form a $\mk$--basis of $I_\calS$. We define also a ``restriction'' map, \rev{which} we also denote with the symbol $r:\sfS(\mF)\lra \sfS(\mF_\calS)$ by sending to zero all the elements of $\mF \senza \mF_\calS$ and we notice that the restrictions $r(R_{f,f'}) \in \sfS^2(\mF_\calS)$ of the relations $R_{f,f'}$ {to $\calS$}, for $f,f' \in \mF_\calS$ which are \notcomparable, form a set of straightening relations. Summarizing we have:
\begin{teo}[\cite{L:SMT}]\label{thm:SMTFlagSchubert}\hfill
\begin{itemize}
\item[i)] $(\mF,\tabr)$ is a SMT for $\grG_\calF$, and the relations $R_{f,f'}$ for $f,f' \in \mF$ which are \notcomparable form a set of straightening relations.
\item[ii)] $(\{r(a)\mid a\in\mF_\calS\},\tabr)$ is a SMT for $\grG_\calS$, and the relations ${r}(R_{f,f'}) \in \sfS^2 (\mF_\calS)$ for $f,f' \in \mF_\calS$ which are \notcomparable form a set of straightening relations. Moreover, the kernel $I_\calS$ of the restriction map has a $\mk$--basis consisting of the set of all standard monomials which contain elements not in $\mF_\calS$.
\end{itemize}
\end{teo}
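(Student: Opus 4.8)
The plan is to assemble the statement from Littelmann's path model together with the explicit monomial order of \cite{chirivi}. Fix the dominant weight $\lambda$ with $\calL = \calL_\lambda$; by Borel--Weil for $K$ one has $\grG(\calF,\calL^n) \cong V_{n\lambda}^*$, so $\grG_\calF \cong \bigoplus_{n\geq 0} V_{n\lambda}^*$ is the coordinate ring of the cone. Littelmann attaches to $\lambda$ a family of LS paths of shape $n\lambda$ whose path vectors $p_\psi$ form a basis of $V_{n\lambda}^*$; here $\mF$ is the set of path vectors $p_\pi$ indexed by LS paths of shape $\lambda$, placed in degree $1$, and $\tabr$ is the comparability relation on LS paths recalled in \cite{L:SMT} (comparability in Bruhat order of a final direction of one path with an initial direction of the next). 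The combinatorial backbone is the concatenation rule: $\pi_1\tabr\cdots\tabr\pi_k$ forces $\pi_1\ast\cdots\ast\pi_k$ to be an LS path of shape $k\lambda$, and this gives a bijection between $\tabr$--chains of $k$ shape--$\lambda$ paths and LS paths of shape $k\lambda$.

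For part~(i): under this bijection a degree--$n$ standard monomial $p_{\pi_1}\cdots p_{\pi_n}$ corresponds to the LS path $\psi=\pi_1\ast\cdots\ast\pi_n$, and the product rule for path vectors gives $p_{\pi_1}\cdots p_{\pi_n}=p_\psi+(\text{a combination of strictly lower path vectors})$ for the natural order on LS paths; thus the change of basis from degree--$n$ standard monomials to shape--$n\lambda$ path vectors is unitriangular, and since the latter are a basis of $V_{n\lambda}^*$, so are the former, whence $\mS\mM(\mF)$ is a basis of $\grG_\calF$. For the straightening relations take $f=p_\pi$, $f'=p_{\pi'}$ with $\pi,\pi'$ non--$\tabr$--comparable LS paths of shape $\lambda$; then $ff'\in\grG(\calF,\calL^2)\cong V_{2\lambda}^*$ is a combination of degree--$2$ standard monomials, and one must check that each is $\monomialOrder$--strictly smaller than $ff'$. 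This is the quadratic straightening law of \cite{L:SMT}: the expansion of $ff'$ begins from its leading term $p_{\pi\ast\pi'}$ and is produced by successively ``standardizing'' the non--standard pair, every step lowering the relevant directions in Bruhat order, which by the construction of the order in \cite{chirivi} strictly decreases the monomial; non--comparability guarantees the pair is genuinely non--standard. I expect this to be the main obstacle, and it is exactly where one needs the explicit order of \cite{chirivi} --- built so as to refine $\tabr$ and to be compatible with the Bruhat order on defining chains --- and not merely the abstract existence of a monomial order.

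For part~(ii): use the compatibility of the path model with Schubert varieties from \cite{L:SMT}. For $\calS=\calS_w$ one has $r(p_\psi)\neq0$ exactly for the LS paths $\psi$ ``supported in $\calS$'' (those all of whose directions are $\leq w$), and the nonzero restrictions $r(p_\psi)$ with $\psi$ of shape $n\lambda$ supported in $\calS$ form a basis of $\grG(\calS,\calL^n)$. Hence $\mF_\calS$ is the set of shape--$\lambda$ path vectors supported in $\calS$, the concatenation bijection restricts to one between $\tabr$--chains with all terms in $\mF_\calS$ and shape--$k\lambda$ LS paths supported in $\calS$, and $(\{r(a):a\in\mF_\calS\},\tabr)$ is a SMT for $\grG_\calS$. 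The statement on $I_\calS$ follows: a standard monomial restricts to $0$ exactly when some factor $p_{\pi_i}$ has $\pi_i$ with a direction $\not\leq w$, i.e.\ lies outside $\mF_\calS$, and those monomials form a basis of $I_\calS$ since their complement restricts to a basis of $\grG_\calS$. Finally, apply $r$ to $R_{f,f'}=ff'-P_{f,f'}$ for $f,f'\in\mF_\calS$ non--$\tabr$--comparable: since $r$ annihilates precisely the standard monomials with a factor outside $\mF_\calS$, the term $r(P_{f,f'})$ is a sum of degree--$2$ standard monomials of $\grG_\calS$, each still $\monomialOrder$--strictly below $r(f)r(f')$ because $\monomialOrder$ is a single total order on all monomials, restricted to the subsemigroup generated by $\mF_\calS$. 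Thus the $r(R_{f,f'})$ form a set of straightening relations, completing the plan.
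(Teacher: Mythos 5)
Your plan is correct and follows essentially the same route as the paper, which offers no independent argument here but simply recalls Littelmann's path-model standard monomial theory \cite{L:SMT} together with the monomial order of \cite{chirivi}: the concatenation bijection, the unitriangular change of basis to path vectors, the quadratic straightening law, and the compatibility with Schubert varieties are exactly the ingredients being cited. The only caveat is that the deep input (the product formula for path vectors and the restriction behaviour, proved in \cite{L:SMT} via contracting modules) is assumed rather than proved in your sketch, but that is precisely how the paper treats it as well.
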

The elements of $\mF$ are eigenvectors for the action of $T_K$ and we denote by $weight(f)$ the weight of $f\in \mF$ w.r.t.\ the action of $T_K$. The t.a.b.r.\ $\tabr$ is compatible with the dominant order in the following sense: if $f\tabr f'$ and $f\neq f'$ then $weight(f)< weight(f')$ w.r.t.\ the dominant order. Moreover $\mF$ has a minimum $f_0$ which is a lowest weight vector.

A Richardson variety is the closure of the intersection of a $B_K$--orbit with an orbit of the Borel subgroup opposite to $B_K$. In this paper we are interested in a particular type of Richardson variety, namely given a Schubert variety $\calS$ we will consider the Richardson variety $\calS_0 \rev{\doteq}\{y\in \calS\st f_0(y)=0\}$. The above described SMT and Theorem \ref{thm:SMTFlagSchubert} for $\calS$ immediately generalize to $\calS_0$ by choosing as set of generators $\mF_0\rev{\doteq}\mF_\calS\senza\{f_0\}$. \rev{Further we denote by $\mM_0$ the set of standard monomials not containing $f_0$ and by $\mS\mM_0$ the subset of those standard monomials not containing $f_0$.}

\subsection{Ridge Richardson varieties and compatibility with Levi factors} \label{ssec:compatibilita}

Let $L$ be a finite type standard Levi of $K$ and let \rev{$\calS$} be an $L$--stable \rev{Schubert} variety. The inclusion $\rev{\mF_\calS}\subset\rev{\mF}$ gives a vector space \rev{injection} $\Gamma(\rev{\calS},\calL)\longrightarrow\Gamma(\calF,\calL)$; this \rev{map} is not $L$--equivariant \rev{in} general.

\begin{oss}\label{remark_controesempio} Although the example is very simple, we need to introduce some details.

Let $K=\mathsf{SL}(3,\mC)$, let $L$ be the set of matrices stabilizing the decomposition $\mC^3=\mC^2\oplus\mC$ and let $\mathcal{F}$ be the full flag variety. We choose as Borel $B$ the set of upper triangular matrices in $G$.

We take the line bundle $\mathcal{L}$ realizing the embedding of the cone over $\mathcal{F}$ in the vector space $V$ of $3\times 3$ traceless complex matrices (i.e. in the Lie algebra of $G$). Let $M\doteq (x_{i,j})_{1\leq i,j\leq 3}$ be the matrix of coordinates of the space of $3\times 3$ matrices. Given a sequence $R=i_1i_2\cdots i_r$, with $r\leq3$, let $d(R)$ be the function on $V$ given by the determinant of the submatrix of $M$ with rows $i_1,i_2,\ldots,i_r$ and coloumns $1,2,\ldots,r$. As computed in \cite{chThesis}, for this embedding the Littelmann basis $\mF$ of $\Gamma(\mathcal{F},\mathcal{L})$ is given by the functions
$$
p(\tableau{R_1}{R_2})\doteq d(R_1)d(R_2)
$$
with $\tableau{R_1}{R_2}$ one the following tableaux
$$
\tableaun{1}{2}{1},\ \tableaun{1}{2}{2},\ \tableaun{1}{3}{1},\ \tableaun{1}{2}{3},\ \tableaun{2}{3}{1},\ \tableaun{1}{3}{3},\ \tableaun{2}{3}{2},\ \tableaun{2}{3}{3}.
$$
The Schubert varieties $X_\tau$ in $\mathcal{F}$ are indexed by permutations $\tau\in S_3$; we consider the Schubert variety $\mathcal{S}\doteq X_{(123)}$. The restriction of the functions $p(\tableau{R_1}{R_2})$ not vanishing on $\mathcal{S}$ are a basis $r(\mF_{\mathcal{S}})$ of $\Gamma(\mathcal{S},\mathcal{L})$; in our situation $\mF_{\mathcal{S}}$ is $p(\tableaun{1}{2}{1})$, $p(\tableaun{1}{2}{2})$, $p(\tableaun{1}{3}{1}),\,p(\tableaun{2}{3}{1}),\,p(\tableaun{2}{3}{2})$.

An easy computation shows that the image of the inclusion of $\Gamma(\mathcal{S},\mathcal{L})$ in $\Gamma(\mathcal{F},\mathcal{L})$ induced by $\mF_{\mathcal{S}}\subset\mF$ is not stable \rev{under} the action of the Lie algebra of $L$\rev{;} hence \rev{this inclusion} is not $L$--equivariant.
\end{oss}

Let $\grz$ be the lowest weight of $\Gamma(\calF,\calL)$. A $L$--stable Richardson variety $\calR\subset \calF$ is said \emph{extremal} (w.r.t.\ $\calL$) if the lowest weights of the irreducible $L$--submodules of $\Gamma(\calR,\calL)$ are in the $W_K$--orbit of $\grz$.

Now we define a class of $L$--stable extremal Richardson varieties for which we can describe the coordinate ring. Let $Q$ be the standard parabolic such that $\calF=K/Q$, let $w_0,w_1,\cdots,w_\ell\in W^Q$ be a sequence of minimal representatives with the following properties:
\begin{itemize}
\item $w_0=e$, $w_h=s_{\gamma_h}u_hw_{h-1}$ with $\gamma_h\in\Delta_K\setminus\Delta_L$ and $u_h\in W_L$ for $h=1,2,\ldots,\ell$;
\item $\length(w_h)=1+\length(u_h)+\length(w_{h-1})$ for $h=1,2,\ldots,\ell$;
\item setting $\grz_h\doteq -w_h(\grz)$, we have $\langle\gamma_h\cech,\grz_h\rangle=-1$ and $\langle\gamma\cech,\grz_h\rangle=0$ for all $\gamma\in\Delta_K\setminus\Delta_L$ and $\gamma\neq\gamma_h$,
\item $u_k(\grz_h)=\grz_h$ for $0\leq k\leq h$.
\end{itemize}

Define $\tau$ as the minimal representative of $w_Lw_\ell$, where $w_L$ is the longest element of $W_L$; let $\calS\doteq\overline {B_K\tau Q/Q}$ be the Schubert variety of $\calF$ corresponding to $\tau$. The following theorem is proved as Theorem 39 in \cite{CLM}.

\begin{teo}
With the above introduced notations we have: the Schubert variety $\calS$ is extremal, $L$--stable and, moreover, for all $n\geq 0$ we have
$$
\Gamma(\calS,\calL^n)=\bigoplus_{0\leq i_1\leq i_2\leq\cdots\leq i_n\leq \ell}V_{\grz_{i_1}+\grz_{i_2}+\cdots+\grz_{i_n}}^*
$$
as $L$--modules.
\end{teo}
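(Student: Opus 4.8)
The plan is to reduce the statement to the combinatorial structure of the SMT for the flag variety $\calF$ recalled in Theorem \ref{thm:SMTFlagSchubert}, exploiting the four bulleted conditions on the sequence $w_0,w_1,\ldots,w_\ell$ to control how sections of $\calL^n$ decompose as $L$--modules. First I would analyze the single step $\calS$ itself: by construction $\tau$ is the minimal representative of $w_L w_\ell$, so $\calS=\overline{B_K\tau Q/Q}$ is an $L$--stable Schubert variety (since $B_L\tau Q\subset B_K\tau Q$ and applying elements of $W_L$ on the left does not leave the Schubert cell class of $w_L w_\ell$ because $\tau$ is already the minimal coset representative, so $W_L\tau\subset[e,\tau]$ in Bruhat order). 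The $L$--stability of $\calS$ is thus a length/Bruhat bookkeeping, using $\length(w_h)=1+\length(u_h)+\length(w_{h-1})$ to see that the $w_h$ form a saturated-type chain and that $\tau\geq w_\ell\geq\cdots\geq w_0=e$.

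Next I would compute $\Gamma(\calS,\calL)$ as an $L$--module. By Theorem \ref{thm:SMTFlagSchubert}(ii) (extended to the Schubert variety $\calS$), $\Gamma(\calS,\calL)$ has a basis indexed by those standard ``paths'' (single generators in $\mF$) whose weight support lies in $\calS$; and by the general theory of Demazure modules / Schubert varieties for Kac-Moody groups, $\Gamma(\calS,\calL)^*$ is the Demazure submodule $V_\grz^{\tau}$ of $V_\grz$ (with $\grz$ the lowest weight of $\Gamma(\calF,\calL)$). The key claim is that as an $L$--module this Demazure module decomposes as $\bigoplus_{h=0}^\ell V^L_{\grz_h}$, where $V^L_\mu$ denotes the irreducible $L$--module of lowest weight $\mu$. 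This is where the third and fourth bullets enter: the condition $\langle\gamma_h\cech,\grz_h\rangle=-1$, $\langle\gamma\cech,\grz_h\rangle=0$ for $\gamma\in\Delta_K\setminus\Delta_L$, $\gamma\neq\gamma_h$, forces the extremal weight $\grz_h=-w_h\grz$ to be $W_L$--dominant in the appropriate sense and to generate an $L$--irreducible, while $u_k(\grz_h)=\grz_h$ for $k\leq h$ guarantees the $L$--submodules generated by the successive extremal vectors $v_{\grz_0},v_{\grz_1},\ldots,v_{\grz_h}$ are compatibly nested inside the Demazure filtration. So one argues by induction on $\ell$: the Demazure module $V_\grz^{w_h}$ equals $V_\grz^{w_{h-1}}\oplus V^L_{\grz_h}$ as $L$--modules, using the Demazure character formula $\mathrm{ch}\,V_\grz^{s_{\gamma_h}u_h w_{h-1}}=D_{\gamma_h}D_{u_h}\mathrm{ch}\,V_\grz^{w_{h-1}}$ and the length additivity to split the operator, then checking on characters that $D_{\gamma_h}D_{u_h}$ applied to the lowest-weight line of $V_\grz^{w_{h-1}}$ produces exactly $\mathrm{ch}\,V^L_{\grz_h}$ (a one-step Demazure operator from a vertex with the stated pairing $-1$ produces an $\SL_2\times(\text{stuff})$ string of length $2$ at the relevant node, and $D_{u_h}$ then sweeps out the full $L$--Weyl orbit since $\grz_h$ is $u_k$--fixed). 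This $\ell=1$ case is essentially Theorem 39 of \cite{CLM}, which we are allowed to invoke.

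Finally I would pass from degree $1$ to degree $n$. The multiplication map $\Gamma(\calS,\calL)^{\otimes n}\to\Gamma(\calS,\calL^n)$ is surjective (projective normality of Schubert varieties in their natural embeddings, part of the SMT package — standard monomials of degree $n$ span $\Gamma(\calS,\calL^n)$). So $\Gamma(\calS,\calL^n)$ is a quotient of $\bigl(\bigoplus_{h=0}^\ell V^L_{\grz_h}\bigr)^{\otimes n}$, hence every $L$--irreducible occurring has lowest weight of the form $\grz_{i_1}+\cdots+\grz_{i_n}$ modulo lower terms in the $L$--dominance order — but since all the $\grz_h$ are themselves extremal in the ambient $V_\grz$ and the sums $\grz_{i_1}+\cdots+\grz_{i_n}$ with $0\leq i_1\leq\cdots\leq i_n\leq\ell$ are precisely the extremal weights of $\Gamma(\calS,\calL^n)\subset\Gamma(\calF,\calL^n)^*{}^*$ (these are the weights realized by degree-$n$ standard monomials supported in $\calS$), no ``lower'' terms survive. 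To pin down the exact decomposition with multiplicity one I would count: the number of standard monomials of degree $n$ on $\calS$ is, by the SMT, $\sum_{0\leq i_1\leq\cdots\leq i_n\leq\ell}\dim V^L_{\grz_{i_1}+\cdots+\grz_{i_n}}$ (each standard monomial $f_1\tabr\cdots\tabr f_n$ of the Levi-type SMT corresponds to a chain starting at the extremal vector of the appropriate $\grz_{i_j}$ and then sweeping an $L$--standard tableau), which matches the dimension of the claimed right-hand side; combined with the surjection above and with the fact that the $\grz_{i_1}+\cdots+\grz_{i_n}$ are $W_L$--dominant and pairwise $W_L$--inequivalent, this forces equality as $L$--modules. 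The extremality of $\calS$ then follows immediately since every $\grz_h$, and hence every $\grz_{i_1}+\cdots+\grz_{i_n}$ reached in degree $n$ after the analogous argument, lies in the $W_K$--orbit of (a dominant conjugate of) $\grz$ — more precisely $\grz_h=-w_h\grz\in W_K\grz$ by definition, so the lowest weights of the $L$--constituents of $\Gamma(\calR,\calL)$ are in $W_K\grz$.

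\textbf{Main obstacle.} The delicate point is the induction step identifying the $L$--module structure of the Demazure module $V_\grz^{w_h}$: one must show not merely that $V^L_{\grz_h}$ occurs with multiplicity one and that characters add up, but that the $L$--submodule generated by the new extremal vector $v_{\grz_h}$ is an honest direct-sum complement to $V_\grz^{w_{h-1}}$ inside $V_\grz^{w_h}$ — i.e. that the sum is direct. This is exactly the subtlety the authors flag in Remark \ref{remark_controesempio}: the naive inclusion $\Gamma(\calS,\calL)\hookrightarrow\Gamma(\calF,\calL)$ is \emph{not} $L$--equivariant, so one cannot simply restrict the $L$--action from the ambient module. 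The resolution must use the fourth bullet condition $u_k(\grz_h)=\grz_h$ for $k\leq h$ in an essential way — it is precisely this compatibility of the fixed-point sets that makes the $L$--action on $\Gamma(\calS,\calL)$ well-defined and makes the extremal vectors $v_{\grz_0},\ldots,v_{\grz_\ell}$ into a ``semisimple-looking'' generating set despite the non-equivariance of the inclusion. Verifying this carefully, rather than the routine character arithmetic, is where the real work lies; but as noted this is the content of Theorem 39 of \cite{CLM}, which the excerpt permits us to cite.
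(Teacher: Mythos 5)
Two preliminary remarks. First, the paper itself contains no argument for this statement: it is proved purely by reference, ``as Theorem 39 in \cite{CLM}'', and the present theorem is the direct analogue of that theorem (not merely its ``$\ell=1$ case'', as you suggest when you invoke it for the one-step Demazure computation). So your proposal can only be measured against that proof, and in the end your plan also defers to \cite{CLM} at exactly the point where the work happens. Second, the scaffolding you build around the citation has concrete gaps. The central induction, ``$V_\grz^{w_h}=V_\grz^{w_{h-1}}\oplus V^L_{\grz_h}$ as $L$--modules'', is ill-posed: the intermediate Demazure modules $V_\grz^{w_h}$ are only $B_L$--stable, not $L$--stable, since the $w_h\in W^Q$ are not $W_L$--maximal in their cosets; this is precisely why the theorem passes from $w_\ell$ to $\tau$, the minimal representative of $w_Lw_\ell$, before taking the Schubert variety. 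To run your induction you would have to saturate the whole chain (replace $w_h$ by the minimal representative of $w_Lw_h$) and redo the Demazure-operator and length-additivity bookkeeping for the saturated chain, which you have not done. The degree-$n$ step is also circular: the asserted count of standard monomials of degree $n$ on $\calS$ as $\sum_{0\leq i_1\leq\cdots\leq i_n\leq\ell}\dim V^L_{\grz_{i_1}+\cdots+\grz_{i_n}}$, via a correspondence between standard monomials and chains ``starting at an extremal direction and sweeping an $L$--tableau'', is essentially the statement being proved; in \cite{CLM} this is exactly the nontrivial LS-path/standard-monomial combinatorics, and it does not follow ``by the SMT'' of Theorem \ref{thm:SMTFlagSchubert}.

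Your ``main obstacle'' is also misdiagnosed. Since $\calS$ is $L$--stable, $\Gamma(\calS,\calL^n)$ is intrinsically a finite-dimensional module over the reductive group $L$ in characteristic zero, so once its character is known the direct-sum decomposition is automatic by complete reducibility; there is no delicate ``directness'' issue, and the fourth bullet $u_k(\grz_h)=\grz_h$ is not needed to make the $L$--action well defined. Remark \ref{remark_controesempio} concerns a different map altogether, namely the splitting $s:\Gamma(\calR,\calL)\lra\Gamma(\calF,\calL)$ induced by the choice of SMT basis, whose failure of $L$--equivariance matters for the pl\"uckerization arguments later in the paper but has no bearing on computing the $L$--module structure of $\Gamma(\calS,\calL^n)$. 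So the genuinely hard content remains exactly the character computation that \cite{CLM}, Theorem 39, carries out, and your reconstruction does not stand on its own without it.
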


If $L\subset Q$ then the lowest weight vector $f_0$ of $\Gamma(\calS,\calL)$ is an $L$--eigenvector (of weight $\grz$) and the Richardson variety $\rev{\calR}\doteq\calS_0$ is $L$--stable. So we have the following corollary (see Corollary 40 in \cite{CLM}).

\begin{cor}\label{cor:decR}
The Richardson variety \rev{$\calR$ is $L$--stable and extremal} and\rev{,} for all $n\geq0$,
$$
\Gamma(\rev{\calR},\calL^n)=\bigoplus_{1\leq i_1\leq i_2\leq\cdots\leq i_n\leq \ell}V_{\grz_{i_1}+\grz_{i_2}+\cdots+\grz_{i_n}}^*
$$
as $L$--modules.
\end{cor}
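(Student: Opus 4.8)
The plan is to deduce Corollary \ref{cor:decR} from the preceding Theorem by restricting the decomposition of $\Gamma(\calS,\calL^n)$ along the $L$-stable subvariety $\calR=\calS_0$. First I would verify the hypothesis that makes $\calR$ well behaved: since we assume $L\subset Q$, the lowest weight vector $f_0\in\Gamma(\calS,\calL)$ spans the unique $L$-submodule of lowest weight $\grz$ (here I use that $\calS$ is extremal, so all $L$-lowest weights lie in the $W_K$-orbit of $\grz$, and that $\grz$ itself is $W_L$-invariant because $L\subset Q$ forces $\Delta_L$ to annihilate $\grz$); hence $f_0$ is an $L$-eigenvector and its zero locus $\calS_0$ is $L$-stable. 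This is exactly the setup recalled at the end of Subsection \ref{ssec:SMTflag}, where the SMT and Theorem \ref{thm:SMTFlagSchubert} are transported from $\calS$ to $\calS_0$ via the generators $\mF_0=\mF_\calS\senza\{f_0\}$.

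The key computation is then the following. By Theorem \ref{thm:SMTFlagSchubert}(ii) applied to $\calS$ and to $\calS_0$, the restriction map $r\colon\Gamma(\calS,\calL^n)\to\Gamma(\calS_0,\calL^n)$ is surjective with kernel spanned by the standard monomials of degree $n$ in $\mF_\calS$ that contain the factor $f_0$. On the level of the $L$-module decomposition of the previous Theorem, the summand $V^*_{\grz_{i_1}+\cdots+\grz_{i_n}}$ with $0\le i_1\le\cdots\le i_n\le\ell$ is, under the standard monomial description, generated by the monomial $f_{i_1}\cdots f_{i_n}$ (where $f_{i}$ is the extremal weight vector of weight $\grz_i$, and $f_0$ as before); such a monomial lies in the kernel of $r$ precisely when $i_1=0$, i.e.\ when at least one index equals $0$. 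Therefore $r$ kills exactly the summands indexed by tuples with $i_1=0$ and is an $L$-module isomorphism on the complement, giving
$$
\Gamma(\calR,\calL^n)\isocan\bigoplus_{1\le i_1\le i_2\le\cdots\le i_n\le\ell}V^*_{\grz_{i_1}+\grz_{i_2}+\cdots+\grz_{i_n}}
$$
as $L$-modules. Finally, $\calR$ is extremal because its $L$-lowest weights are among those of $\Gamma(\calS,\calL)$, hence in the $W_K$-orbit of $\grz$.

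The main obstacle is the compatibility between the $L$-module structure and the standard monomial basis: as Remark \ref{remark_controesempio} shows, the inclusion $\Gamma(\calS,\calL)\hookrightarrow\Gamma(\calF,\calL)$ coming from $\mF_\calS\subset\mF$ need not be $L$-equivariant, so one cannot naively identify $L$-submodules with spans of subsets of $\mF_\calS$. The point is that for the extremal Richardson varieties described before the Theorem this pathology does not occur for the \emph{lowest weight line}: the hypotheses on the sequence $w_0,\dots,w_\ell$ (in particular $u_k(\grz_h)=\grz_h$ and the condition $\langle\gamma\cech,\grz_h\rangle=0$ for $\gamma\ne\gamma_h$) guarantee that the extremal vectors $f_i$ of weight $\grz_i$ span $L$-stable lines and that the kernel of $r$ is itself an $L$-submodule, being the span of those $V^*_\mu$ with $\mu$ involving $\grz_0=\grz$. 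Making this precise — i.e.\ checking that "contains $f_0$" is an $L$-stable condition on the relevant graded piece — is the only nontrivial step; it is already carried out in the proof of Theorem 39 and Corollary 40 of \cite{CLM}, so here it suffices to invoke that argument together with the hypothesis $L\subset Q$.
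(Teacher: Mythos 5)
Your argument is correct and follows essentially the same route as the paper, which likewise only observes that $L\subset Q$ forces $f_0$ to be an $L$--eigenvector of weight $\grz$ (so that $\calR=\calS_0$ is $L$--stable) and then refers to Corollary~40 of \cite{CLM} for the decomposition. The one step worth tightening is your middle paragraph: rather than matching individual $L$--summands with spans of standard monomials (which, as you yourself note via Remark \ref{remark_controesempio}, is delicate), the kernel of $r$ is cleanly identified as $f_0\cdot\Gamma(\calS,\calL^{n-1})\isocan\mk_{\grz}\otimes\Gamma(\calS,\calL^{n-1})$ as $L$--modules (multiplication by the $L$--eigenvector $f_0$ is injective and $L$--equivariant up to the character $\grz$), after which semisimplicity and the Theorem's decompositions in degrees $n$ and $n-1$ yield precisely the removal of the $i_1=0$ summands --- which is the content of the CLM argument you invoke, exactly as the paper does.
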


We call $\rev{\calR}$ the \emph{ridge} Richardson variety defined by $w_0,\dots,w_\ell$ \rev{and we let $s:\Gamma(\calR,\calL)\longrightarrow\Gamma(\calF,\calL)$ be the corresponding inclusion.}

\section{Pl\"uckerization}\label{sec:pluckerization}

In this section we propose a general pattern for determining a SMT and the straightening relations of the coordinate ring of a spherical homogeneous variety. This method does not always work, nevertheless it works in some interesting cases we present in the next sections. We call this method \emph{pl\"uckerization} since it reduces the computation of the relations of a ring to the Pl\"ucker relations for a generalized flag variety. The main ingredient is the introduction of a bigger group of symmetries into the problem.

In what follows we denote by $G$ a fixed semisimple, connected and simply connected algebraic group, $\gog$ its Lie algebra and $G\lra \bar G$ an isogeny.  We denote all the objects defined in Section \ref{sec:simboli} $\got_G,\gob_G$, etc.\ simply by $\got, \gob$, etc. without the subscript $G$. The fundamental weights of $\gog$ will be denoted by $\omega_\gra$, $\alpha\in\Delta$.

Let $H$ be an algebraic subgroup of $G$, $X \rev{\doteq} G/H$ and $A$ the ring of regular functions $\mk[X]$.  We assume that $X$ is spherical, so every irreducible representation appears in $A$ with multiplicity at most one. Let $\Omega_A^+ \rev{\doteq}\{\grl \in \grL^+: V_\grl^H \neq 0\}$ so that $A \isocan \bigoplus_{\grl\in \Omega_A^+}V_\grl^*$ as a $G$\rev{--}module. If $M$ and $N$ are vector subspaces of $A$ we denote by $M\cdot N$ the vector space spanned by the products $m\cdot n$ with $m\in M$ and $n\in N$ in the ring $A$.

In order to develop our method we need to assume various hypotheses on $A$ that we denote by $\hypOne$, $\hypTwo$,... \rev{In stating each hypothesis we assume the preceding ones.}

First of all notice that $\Omega_A^+$ is a submonoid of $\grL$; we assume

\begin{itemize}
\item[\hypOne:] $\Omega_A^+$ is a free monoid.
\end{itemize}

We denote by $\gre_1, \gre_2, ... , \gre_\ell$ a basis of $\Omega_A^+$ and we define $A_j$ as the sum of all submodules $V_\grl^*$ of $A$ with $\grl = \sum_{i}a_i \gre_i$ and $\sum_{\rev{i}} a_i = j$; further $A'_j\rev{\doteq} \bigoplus_{i\leq j} A_i$. Notice that, $X$ being irreducible, the product $V_\grl^*\cdot V_\mu^*$ in the ring $A$ contains $V_{\grl+\mu}^*$. Hence $A'_i \cdot A'_j = A'_{i+j}$, and $A$ is generated by $A_1$ as a $\mk$--algebra. Our aim is to describe the ring $A$ with respect to this natural choice of generators. We denote by $Rel_A$ the kernel of the natural map $\psi:\sfS(A_1)\lra A$.

The heart of our assumption is the existence of a Richardson variety whose coordinate ring is isomorphic to $A$ as a $G$--module. Let $K$ be a Kac-Moody group which contains $G$ as the semisimple part of a Levi factor $L$. Let $B_K$ be a Borel subgroup of $K$ such that $B_K \cap G = B$, $T_K \subset B_K$ a maximal torus of $K$ containing $T$, $\grL_K$ the set of weights of $T_K$, $\grD_K\supset \grD$ the set of simple roots of $K$ w.r.t.\xspace these choices and $\grD_0 \rev{\doteq} \grD_K \senza \grD$.  Let $Q$ be a parabolic of $K$ containing $G$, $\calF \rev{\doteq} K / Q$ the associated generalized flag variety, $\calL$ an ample equivariant line bundle on $\calF$ and $\calR \subset \calF$ a $G$--stable ridge Richardson variety w.r.t.\xspace $\calL$ defined by $w_0,\,w_1,\ldots,w_\ell$ as in the previous section. Our main hypotheses are the following:

\begin{itemize}
\item[\hypTwo:] for all $i$ we have $A_i\isocan\grG(\calR,\calL^i)$ as $G$--modules, in particular $A\isocan\grG_\calR$.
\item[\hypThree:] there exists a $G$--equivariant morphism of $\mk$--algebras $\grf : \grG_\calF \lra A$. We define $\grf_i \rev{\doteq} \grf\ristretto_{\grG(\calF,\calL^i)}$.
\end{itemize}

Now let $1\leq h\leq\ell$ and let $V_h$ be the $G$--submodule of $\Gamma(\mathcal{F},\mathcal{L})$ generated by the line of vectors of weight $w_h(\grz)$; moreover let $V \rev{\doteq} V_1\oplus V_2\oplus\cdots\oplus V_\ell$. By Corollary \ref{cor:decR} we have $V\isocan\Gamma(\calR,\calL)$, hence by \hypTwo\ in the case $i=1$, setting $\grz_h\rev{\doteq}-w_h(\grz)$ as in the previous section and up to renumbering $\gre_1,\gre_2,\ldots,\gre_\ell$, we have $\grz_{h|T}=\gre_h$ for $h=1,2,\ldots,\ell$. In particular $V_h\simeq V_{\gre_h}^*$ for $h=1,2,\ldots,\ell$.

In order to relate the ring $\grG_\calR$ and $A$ we need also the following strenghtening of \hypThree\ in the case $i=1$

\begin{itemize}
\item[\hypFour:] The map $\varphi_1$ is an isomor\rev{ph}ism of $G$--modules from $V$ to $A_1$.
\end{itemize}

We now introduce an order on $G$--modules and some compatibility conditions with a $G$--stable decomposition of $\grG(\calF,\calL)$.  The space $Z^* \rev{\doteq} \grG(\calF,\calL)$ is an integrable lowest weight irreducible representation of $K$ of lowest weight $\zeta$; let $f_0 \in \mF$ be a lowest weight vector in $Z^*$. Let $C$ be the identity component of the subgroup of elements of $T_K$ which commute with $G$. 

We can decompose $Z^*$ into \rev{isotypical} components w.r.t. the action of $C$, these are described as follows: if $\eta \in \mN[\grD_0]$ we define $Z^*_\eta$ as the subspace generated by all $T_K$--weight vectors whose weight is equal to $\zeta+\eta+\grg$ with $\grg \in \mN[\grD]$.  These subspaces are finite dimensional and $G$--stable.  If $\eta = \sum_{\gra\in\Delta_0} a_\gra \gra$ we define $ht_0(\eta) = \sum a_\gra$ and we set $Z^*_n \rev{\doteq} \bigoplus Z^*_\eta$ where the sum is over all the elements $\eta\in \mN[\Delta_0]$ such that $ht_0(\eta)=n$. Notice that for $h=1,2,\ldots,\ell$, by the definition of $\grz_h$, $V_h$ is a submodule of $Z_h^*$. Further the $\mk$--span of $\mF_\calR$ is a subvector space of $Z_1^*\oplus Z_2^*\oplus\cdots\oplus Z_\ell^*$ by construction of $\calR$.

Let $\grado:\Omega_A^+\longrightarrow\mN$ be the additive extension of $\grado(\gre_h)=h$ for $h=1,2,\ldots,\ell$.

\begin{itemize}
\item[\hypFive:] For all $\grl, \mu, \nu \in \Omega_A^+$ if $V_\nu^*\subset V^*_\grl \otimes V^*_\mu$ then $\grado(\nu) \leq\grado(\grl)+\grado(\mu)$.
\item[\hypSix:] For all $n\in\mN$ if $M$ is a $G$--irreducible submodule of $Z^*_n$ of highest weight $\grl\in \Omega_A^+$ then $\grado(\grl) \leq n$. Further, equality holds if and only if either $1\leq n\leq \ell$ and $M=V_n$ or $n=0$ and $M=\mk\,f_0$.
\end{itemize}

Notice that a combinatorial condition which implies \hypFive\ is the following: for all $\grl, \mu, \nu \in \Omega_A^+$ if $\nu \leq \grl + \mu$ then $\grado(\nu) \leq \grado(\grl)+\grado(\mu)$.

If $M$ is a $G$--module and $n \in \mN$ we define $M_{\grado \leq n}$ as the sum of all \rev{isotypical} components of type $V_\grl^*$ with $\grl \in \Omega_A^+$ and $\grado(\grl)\leq n$. Under \hypFive\ we have that $A_{\grado \leq n} \cdot A_{\grado\leq m} \subset A_{\grado \leq n+m}$ and if $n, m \geq \ell$ then $A_{\grado \leq n} \cdot A_{\grado\leq m} = A_{\grado \leq n+m}$. Since $\calR$ is irreducible, the same properties hold for $(\grG_\calR)_{\grado \leq n}$ too.

Let $r:\Gamma(\calF,\calL)\longrightarrow\Gamma(\calR,\calL)$ be the restriction map as in the previous sections. Since, as noted above, the two $G$--modules $\grG(\calR,\calL)$ and $V$ are isomorphic, we have a partition $\mF_0 = \coprod_{i=1}^\ell \mF_0(\gre_i)$; where $r(\mF_0(\gre_i))$ is a basis of a $G$--submodule of $\grG(\calR,\calL)$ isomorphic to $V_{\gre_i}^*$.

We are now in a position to introduce the basis we will use to express our standard monomial theory for the ring $A$. For $m\in\mM$ let $g_m\doteq\varphi(m)$, then we have

\begin{lem}\label{lemma:G0basis}
The set $\mG_0\doteq\{g_f \st f\in\mF_0\}$ is a basis of $A_1$ as a $\mk$--vector space.
\end{lem}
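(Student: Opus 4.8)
The plan is a dimension count combined with an analysis of how $\grf_1$ interacts with the restriction map $r\colon\grG(\calF,\calL)\longrightarrow\grG(\calR,\calL)$. Write $U$ for the $\mk$--span of $\mF_0$ in $Z^*=\grG(\calF,\calL)$. By Theorem~\ref{thm:SMTFlagSchubert}, in the version for $\calR=\calS_0$ recalled in Section~\ref{ssec:SMTflag}, the set $r(\mF_0)$ is a $\mk$--basis of $\grG(\calR,\calL)$; hence $r$ is injective on $U$ and, using \hypTwo\ with $i=1$, $|\mF_0|=\dim_\mk\grG(\calR,\calL)=\dim_\mk A_1$. So it is enough to prove that $\grf_1$ carries $U$ isomorphically onto $A_1$: then $\mG_0=\grf_1(\mF_0)$ is a $\mk$--basis of $A_1$.

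First I would locate $\mF_0$ inside the $C$--decomposition of $Z^*$. Each $f\in\mF$ is a $T_K$--eigenvector, hence a $C$--eigenvector, hence lies in a single summand $Z^*_\eta$. Now $r$ is $L$--equivariant, because $\calR$ is $L$--stable, so it preserves $C$--weights; on the other hand, by Corollary~\ref{cor:decR} the summands $V_{\grz_h}^*$ of $\grG(\calR,\calL)$ have pairwise distinct $C$--weights, and the $C$--weight of $V_{\grz_h}^*$ is the restriction to $C$ of $-w_h(\grz)$, which lies in the $C$--weight whose $ht_0$ equals $h$ (since $V_h\subseteq Z^*_h$ and $r$ maps $V_h$ onto $V_{\grz_h}^*$). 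Consequently the partition $\mF_0=\coprod_h\mF_0(\gre_h)$ is exactly the grouping of $\mF_0$ by $C$--weight, and $\mF_0(\gre_h)\subseteq Z^*_h$ for every $h$. Moreover, by the equality clause of \hypSix\ with $n=h$, $Z^*_h$ contains a unique $G$--submodule isomorphic to $V_{\gre_h}^*$, namely $V_h$; let $p_h\colon Z^*_h\twoheadrightarrow V_h$ be the $G$--equivariant projection onto it.

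Next I would show that $p_h$ restricts to an isomorphism $\langle\mF_0(\gre_h)\rangle\to V_h$. Both spaces have dimension $\dim V_{\gre_h}^*$, so it suffices to check injectivity; but if $x$ lies in the kernel then $r(x)\in r(\ker p_h)\cap V_{\grz_h}^*$, and this intersection is $0$ because the $G$--module $r(\ker p_h)$, a quotient of $\ker p_h$, contains no copy of $V_{\gre_h}^*$ whereas $V_{\grz_h}^*$ is entirely of that isotype; since $r$ is injective on $U$ we get $x=0$. Combining this with \hypFour---which says $\grf_1$ maps $V_h\cong V_{\gre_h}^*$ isomorphically onto the $\gre_h$--isotypical component of $A_1$---we conclude that $\grf_1\circ p_h$ carries $\langle\mF_0(\gre_h)\rangle$ isomorphically onto that component of $A_1$.

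The remaining point---and where the real work lies---is to show that $\grf_1$ agrees with $\grf_1\circ p_h$ on $\langle\mF_0(\gre_h)\rangle$, equivalently that $\grf_1\bigl(Z^*_h\bigr)$ is contained in the single isotype $V_{\gre_h}^*$, so that $\grf_1$ annihilates $(\ker p_h)\cap Z^*_h$; this cannot be avoided, since in general $\langle\mF_0(\gre_h)\rangle\neq V_h$ by Remark~\ref{remark_controesempio}. Hypothesis \hypSix\ only tells us directly that $\grf_1(Z^*_h)\subseteq A_{\grado\leq h}$ with $V_h$ the unique constituent of $\grado$--degree exactly $h$; I would upgrade this, using \hypSix\ in all degrees together with the multiplicativity $A_{\grado\leq a}\cdot A_{\grado\leq b}\subseteq A_{\grado\leq a+b}$ of \hypFive\ (applied to the $C$--graded algebra $\grG_\calF$ and its image under $\grf$, and arguing by induction on $h$), to the assertion that no $G$--constituent of $Z^*_h$ of $\grado$--degree $<h$ survives $\grf_1$. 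Granting this, $\grf_1\bigl(\langle\mF_0(\gre_h)\rangle\bigr)=\grf_1(V_h)$ is the $\gre_h$--component of $A_1$, and summing over $h$ gives $\grf_1(U)=A_1$ and, by the dimension count, injectivity of $\grf_1$ on $U$. The main obstacle is precisely this upgrade---equivalently, controlling the ``error term'' produced by the failure of the section $\grG(\calR,\calL)\hookrightarrow\grG(\calF,\calL)$ attached to $\mF_0$ to be $G$--equivariant---and it is exactly there that \hypFive\ and \hypSix\ are used in an essential way.
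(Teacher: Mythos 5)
Your opening steps are sound and close to the paper's: the cardinality count $|\mF_0|=\dim_\mk\grG(\calR,\calL)=\dim_\mk A_1$ via \hypTwo, the observation $\mF_0(\gre_h)\subseteq Z^*_h$, the identification of $V_h$ as the unique $V_{\gre_h}^*$--isotype of $Z^*_h$ via the equality clause of \hypSix, and the injectivity of the projection $p_h$ on $\langle\mF_0(\gre_h)\rangle$ (using injectivity of $r$ on the span of $\mF_0$) are all correct. The gap is exactly the step you yourself flag as ``where the real work lies'': the claim that $\grf_1$ agrees with $\grf_1\circ p_h$ on $\langle\mF_0(\gre_h)\rangle$, which you propose to obtain from the stronger assertion that no $G$--constituent of $Z^*_h$ of $\grado$--degree $<h$ survives $\grf_1$. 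This assertion is not only left unproved (the appeal to \hypFive, \hypSix\ and ``induction on $h$'' is not an argument --- multiplicativity of $A_{\grado\leq n}$ says nothing about $\grf$ vanishing anywhere), it is false in general. Nothing in \hypOne--\hypSix\ forces $\grf$ to kill a constituent $V_\grl^*\subseteq Z^*_h$ with $\grl\in\Omega_A^+$ and $\grado(\grl)<h$; on the contrary, the whole later construction of the polynomials $F_f(u)$ with $\psi(F_f)=\grf(f)$ for $f\in\mF_1$, and the two--step degeneration of Corollary \ref{cor:generalDegeneration}, exist precisely because these images are generically non-zero (already $\grf(f_0)$ is a non-zero constant, and $\grf(f)=0$ is asserted only when $\grl_f\notin\Omega_A^+$). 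Since $(\id-p_h)\langle\mF_0(\gre_h)\rangle$ is not $G$--stable (Remark \ref{remark_controesempio}), there is no equivariance argument to rescue the vanishing either.

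The repair --- and this is what the paper actually does --- is not to kill the error terms but to note that they are \emph{triangular}: by \hypSix\ every constituent of $U_h=\ker p_h$, and every constituent of $Z^*_k$ for $k<h$, with highest weight in $\Omega_A^+$ has $\grado$ strictly less than $h$, hence its $\grf$--image lies in isotypes different from $V_{\gre_h}^*$. So take $v\in\langle\mF_0\rangle$ with $\grf(v)=0$, write $v=\sum_h(v_h+u_h)$ with $v_h\in V_h$, $u_h\in U_h$, and let $n$ be maximal with $v_n+u_n\neq 0$; then the $\gre_n$--isotypical component of $\grf(v)$ is exactly $\grf(v_n)$, so $v_n=0$ by \hypFour; now $r(v_n+u_n)=r(u_n)=0$, and injectivity of $r$ on $\langle\mF_0(\gre_n)\rangle$ gives $v_n+u_n=0$, a contradiction. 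Your intermediate results (injectivity of $p_n$ on the span, the isotype analysis of $r(\ker p_n)$) fit naturally into this scheme, but as written your proof rests on a pivotal claim that does not hold.
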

\begin{proof} Since $A_1$ is isomorphic to $V$ which, in turn, is isomorphic to $\grG(\calR,\calL)$, we find that $\mG_0$ has cardinal\rev{i}ty $\dim A_1$; hence we have only to show that the elements $g_f$, for $f\in\mF_0$, are linearly independent. So let $v$ be an element of the kernel of $\varphi$ restricted to the $\mk$--vector space spanned by $\mF_0$.

For $h=1,2,\ldots,\ell$ denote by $\rev{U}_h$ the sums of all $G$--submodules of $Z_h^*$ but $V_h$. Notice that the kernel of the restriction map $r:\Gamma(\calF,\calL)\longrightarrow\Gamma(\calR,\calL)$ contains $Z_0^*$ and $Z_h^*$ for all $h>\ell$, while $\ker r_{|Z_h^*}=\rev{U}_h$ for $h=1,2,\ldots,\ell$.

We proceed by contradiction; let us write $v=\sum_{h=1}^n v_h+\rev{u}_h$ with $v_h\in V_h$, $\rev{u}_h\in \rev{U}_h$ and $n$ maximal such that $v_n+\rev{u}_n\neq0$. Notice that $\varphi(v_n)$ is an element of the $G$--submodule of $A_1$ isomorphic to $V_n$ by \hypThree. For any $\lambda\in\Omega_A^+$ such that $V_\lambda^*$ is a $G$--submodule of $\rev{u}_n$ or of $Z_h^*=V_h+\rev{u}_h$ \rev{with $h<n$}, we have $\grado(\gre_n)=n>\grado(\lambda)$ by \hypSix. This shows that\rev{, in the above sum expressing $v$,} $v_n$ is the unique vector \rev{belonging} to the dual of a module of weight $\gre_n$. Hence the same is true for $\varphi(v_n)$, since $\varphi$ is a map of $G$--modules; so $\varphi(v_n)=0$. We conclude that $v_n=0$ since $\varphi$ is an isomorphism from $V$ to $A_1$.

Now we have $r(v_n+\rev{u}_n)=r(\rev{u}_n)=0$. But the restriction map $r$ is an injection of the $\mk$--span of $\mF_0(\gre_n)$ in $\Gamma(\calR,\calL)$, hence we find $v_n+\rev{u}_n=0$. This finishes the proof since we assumed that $v_n+\rev{u}_n\neq0$.
\end{proof}
On $\mG_0$ we define the same t.a.b.r.\xspace of $\mF$; that is\rev{:} $g_f \tabr g_{f'}$ if and only if $f \tabr f'$. In order to avoid possible confusion between expressions in $\sfS(\mG_0)$ and elements in $A$, we introduce the ring $\mk[u]\doteq\mk[u_f\st f\in \mF_0]$. If $m=f_1\cdots f_s\in\mM_0$ let $u_m\doteq u_{f_1}\cdots u_{f_s}$ be the corresponding monomial in $\mk[u]$. Since by the previous \rev{L}emma $\mG_0$ is a basis of $A_1$, \rev{$\mk[u]\simeq\sfS(A_1)$ via the map $\psi:\mk[u]\longrightarrow A$ given by} $\psi(u_f)\doteq g_f$.

We introduce on $\mk[u]$ a degree according to the map $\grado$: for $f \in \mF_0 (\gre_i)$ set the degree of $u_f$ equal to $\grado(\gre_i)=i$ and denote by $\grado(r)\in \mN$ the degree of an element $r$ in $\mk[u]$. If $m,m' \in \mM_0$, then we define $u_m\newMonomialOrder u_{m'}$ if either $\grado(u_m) < \grado(u_{m'})$ or $\grado(u_m) = \grado(u_{m'})$ and $m \monomialOrder m'$. Notice that the order $\newMonomialOrder$ is compatible with the \rev{relation} $\tabr$ on $\mG_0$.

We are now ready to introduce the straightening relations. If $f_1,f_2 \in \mF_0$ are not $\tabr$ comparable let $R_{f_1,f_2}\rev{\doteq}f_1f_2-P_{f_1,f_2} \in \sfS^2(Z^*)$ be the straightening relation for $\grG_{\mathcal{F}}$. Applying $\grf$ to this relation we have a relation on the ring $A$. However\rev{,} the polynomial $P_{f_1,f_2}$ is not just a polynomial in the variables $\mF_0$ but also in other variables in $\mF$. Let $\mF_2$ be the subset of $\mF$ which span $\bigoplus_{\rev{n}} Z^*_n$ with $n \leq 2\ell$. Let $\mF_3 \doteq \mF_2\senza \mF_0$. Since the relations are $T_K$--homogeneous (hence $C$--homogeneous) the polynomial $P_{f_1,f_2}$ can be written as $P_{f_1,f_2}^0+P_{f_1,f_2}^1$ where $P^0$ is the sum of all the monomials whose variables are in $\mF_0$ and $P^1$ is a sum of monomials whose variables are in $\mF_3$ or mixed in $\mF_0$ and $\mF_3$. For some applications (mainly to describe an equivariant deformation of $A$ to $\grG_\calR$ ), it will be convenient to change the basis $\mF_3$.

The span of $\mF_3$ is $G$--stable but the basis is not compatible with the decomposition of $Z^*$ in $G$--submodules. Let $\mF_1$ be a basis of $T_K$\rev{--eigenvectors} which span the same subspace as $\mF_3$ but \rev{that} is compatible with the decomposition of $Z^*$ into $G$--submodules. So we can consider $P^1$ also as a combination of monomials in the variables $\mF_1$ or mixed in $\mF_1$ and $\mF_0$. For each $f \in \mF_1$ let $\grl_f \in \grL⁺$ \rev{be} such that the $G$--module generated by $f$ is isomorphic to $V_{\grl_f}^*$. If $\grl_f \notin \Omega^+_A$ then $\grf(f)=0$ and, on the other hand, if $\grl_f = \sum_ia_i \gre_i$ then the multiplication $V_{\gre_1}^{*a_1}\cdots V_{\gre_\ell}^{*a_\ell}$ contains the module $V_{\lambda_f}^*$, hence there exists a polynomial $F_f(u) \in \mk[u]$ with $\grado(F_f(u))=\grado(\grl_f)$ such that $\psi(F_f) = \grf(f)$.

\begin{oss}\label{oss:calcolo}
Notice that, once we know $\grf(f)$, the computation of the polynomial $F_f$ does not depend on the knowledge of the multiplication in the ring $A$ but only on the projection $\sfS^{a_1}(V^*_{\gre_1})\otimes \cdots \otimes \sfS^{a_\ell}(V^*_{\gre_\ell}) \lra V_{\grl_f}^*$ which, up to a non\rev{-}zero scalar, is uniquely determined by the requirement to be $G$--equivariant. \rev{Indeed, $V_{\lambda_f}^*$ appears with multiplicity one in the tensor product being $\lambda_f$ its highest weight vector.}
\end{oss}

Now we define polynomials $\hat P ^0_{f_1,f_2}(u)$ and $\hat P^1_{f_1,f_2}(u)$ in $\mk[u]$. The polynomial $\hat P^0$ is obtained from $P^0$ just by replacing the variable $f\in \mF_0$ with $u_f$, and the polynomial $\hat P^1$ is obtained from $P^1$ by replacing the variable $f\in \mF_0$ with $u_f$ and the variable $f \in \mF_1$ with $F_f(u)$.  The straightening relation is then defined as
$$
\hat R_{f_1,f_2}(u)\doteq u_{f_1}\,u_{f_\rev{2}} - \hat P^0_{f_1,f_2}(u)- \hat P^1_{f_1,f_2}(u).
$$ 

\begin{teo}\label{teo:generalResult} If \hypOne,\ \hypTwo,...,\hypSix\ hold then 
\begin{enumerate}[\noindent i)]
\item for $f_1,f_2 \in \mF_0$ not $\tabr$ comparable the relations $\hat R_{f_1,f_2}(u)$ is a straightening relation w.r.t.\ $\newMonomialOrder$.
\item $(\mG_0,\tabr)$ is a SMT for $A$.
\end{enumerate}
\end{teo}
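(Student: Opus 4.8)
The plan is to transport the standard monomial theory of the ridge Richardson variety $\calR$ to $A$ by means of the map $\grf$, using the hypotheses to control the interference coming from the ``unwanted'' $G$--submodules $U_h$ and from the higher isotypical components $Z^*_n$ with $n>\ell$. First I would dispose of (ii) assuming (i): once $\hat R_{f_1,f_2}$ is known to be a straightening relation with respect to $\newMonomialOrder$, Lemma \ref{lem:SMT} together with Lemma \ref{lemma:G0basis} reduces the claim that $(\mG_0,\tabr)$ is a SMT to showing that the standard monomials $\mS\mM_0$ (in the $g_f$) span $A$ and are linearly independent. Spanning is formal from the straightening relations (they let one rewrite any non-standard monomial as a $\newMonomialOrder$--smaller combination, and $\newMonomialOrder$ is a well-order on monomials). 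For linear independence I would argue by a graded-dimension count: the degree $\grado$ on $\mk[u]$ makes $\psi$ a graded map onto $A$ filtered by $A_{\grado\le n}$, and using \hypFive, \hypSix\ the associated graded of $A$ (and of $\grG_\calR$) under the $\grado$--filtration has, in each degree, dimension equal to $\dim (\grG_\calR)_{\grado\le n}/(\grG_\calR)_{\grado\le n-1}$; since $\mS\mM_0$ for $\grG_\calR$ (equivalently for $\calS_0$, via Theorem \ref{thm:SMTFlagSchubert} applied to the ridge Richardson variety) is a basis of $\grG_\calR$ with the compatible order, comparing cardinalities forces $|\mS\mM_0\cap\{\grado\le n\}|=\dim A_{\grado\le n}$, whence independence.

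The real work is part (i). Fix $f_1,f_2\in\mF_0$ not $\tabr$--comparable and consider the Pl\"ucker relation $R_{f_1,f_2}=f_1f_2-P^0_{f_1,f_2}-P^1_{f_1,f_2}$ in $\sfS^2(Z^*)$, valid in $\grG_\calF$. Applying the $\mk$--algebra homomorphism $\grf$ of \hypThree\ and using $\grf(f)=g_f$ for $f\in\mF_0$ and $\grf(f)=\psi(F_f)$ for $f\in\mF_1$ (so that after the change of basis $\mF_3\rightsquigarrow\mF_1$ the image of $P^1$ is $\psi(\hat P^1)$), one gets the identity $g_{f_1}g_{f_2}=\psi(\hat P^0+\hat P^1)$ in $A$; i.e.\ $\hat R_{f_1,f_2}(u)\in\ker\psi=Rel_A$. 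It remains to check the triangularity: that every monomial occurring in $\hat P^0_{f_1,f_2}$ and $\hat P^1_{f_1,f_2}$ is strictly $\newMonomialOrder$--smaller than $u_{f_1}u_{f_2}$. For $\hat P^0$ this is almost immediate: those monomials are products $u_{g_1}u_{g_2}$ with $g_1,g_2\in\mF_0$ and $g_1g_2$ a standard monomial of degree two occurring in the original $P_{f_1,f_2}$, which by the flag-variety theory is $\monomialOrder$--smaller than $f_1f_2$; moreover $f_1f_2$ and $g_1g_2$ have the same $T_K$--weight, hence the same $C$--weight, hence $\grado(u_{g_1}u_{g_2})=\grado(u_{f_1}u_{f_2})$, so $u_{g_1}u_{g_2}\newMonomialOrder u_{f_1}u_{f_2}$ by the second clause in the definition of $\newMonomialOrder$. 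The point requiring \hypSix\ is $\hat P^1$: a monomial there involves at least one variable $u$ coming from $F_f(u)$ with $f\in\mF_1$, and $\grado(F_f(u))=\grado(\grl_f)$; since $f\in\mF_1\subset\mF_3=\mF_2\senza\mF_0$ spans a $G$--submodule of some $Z^*_n$ with $1\le n\le 2\ell$ but $f\notin\mF_0$, hypothesis \hypSix\ gives $\grado(\grl_f)<n$ (the equality case of \hypSix\ is exactly excluded because $f\notin\mF_0$ and, for the pieces with $n=0$, $f\ne f_0$). Tracking $C$--degrees through the $C$--homogeneous relation $R_{f_1,f_2}$: the $C$--weight of $u_{f_1}u_{f_2}$ corresponds to $n$ for a suitable $n\le 2\ell$, while each monomial of $\hat P^1$ has total $\grado$--degree strictly less than that $n$ because the $\mF_1$--factor contributes strictly less $\grado$ than its $C$--degree while $\mF_0$--factors contribute exactly their $C$--degree; summing, $\grado(\text{monomial of }\hat P^1)<\grado(u_{f_1}u_{f_2})$, so it is $\newMonomialOrder$--smaller by the first clause.

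The main obstacle, and the step I would spend most care on, is making the $C$--degree bookkeeping in the previous paragraph fully rigorous: one must verify that the $C$--weight of a product $g_{f'}$ ($f'\in\mF_0$) records precisely $\grado(\grl_{f'})$ (this uses $\grz_{h|T}=\gre_h$ and $ht_0$), that $P^1$ genuinely lies in the span of monomials each having an $\mF_3$--factor (the $C$--homogeneity of Pl\"ucker relations, already noted in the text), and that after rewriting $P^1$ in the $\mF_1$--basis no cancellation spoils the degree estimate — here Remark \ref{oss:calcolo} is what guarantees $F_f$ has the expected $\grado$ and $\psi(F_f)=\grf(f)$. A secondary subtlety is that $\hat R_{f_1,f_2}(u)$ must be shown to be \emph{well-defined} independent of the auxiliary choices ($\mF_1$, the scalars in $F_f$); but since its image under $\psi$ is forced to be $g_{f_1}g_{f_2}-g_{f_1}g_{f_2}=0$ and $\psi$ is injective on each graded piece up to the filtration, any two choices differ by an element of $\ker\psi$ supported in strictly smaller $\newMonomialOrder$--degree, which does not affect its being a straightening relation. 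Once these degree computations are nailed down, (i) follows and, by the first paragraph, so does (ii).
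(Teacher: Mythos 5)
Your overall route is the same as the paper's: for (i) you apply $\grf$ to the Pl\"ucker relation and use its $T_K$-- (hence $C$--)homogeneity together with \hypSix\ to show that the $\mF_0$--part $\hat P^0$ keeps the $\grado$--degree while being $\monomialOrder$--smaller, and that the part involving $\mF_1$ drops in $\grado$; for (ii) you combine spanning by standard monomials (from the straightening relations) with the comparison of $\dim A_{\grado\le n}$ and $\dim(\grG_\calR)_{\grado\le n}$ via \hypTwo, which is exactly the paper's dimension count.

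There is, however, a genuine flaw precisely at the step you yourself single out, namely the parenthetical excluding the equality cases of \hypSix\ for $f\in\mF_1$. First, ``$f\ne f_0$'' is false: $f_0\in\mF_2\senza\mF_0=\mF_3$, and indeed $f_0\in\mF_1$ (the paper uses $\chi_{f_0}=0$ in the proof of Corollary \ref{cor:generalDegeneration}); nothing rules out degree--two standard monomials of $P^1_{f_1,f_2}$ of the form $f_0f''$ with $f''\in\mF_0$, and for such a monomial the substitution gives $c\,u_{f''}$, whose $\grado$ equals $\grado(u_{f_1}u_{f_2})$, so your claimed strict $\grado$--drop fails; the conclusion is saved only by the tiebreak, since $f''\monomialOrder f_0f''\monomialOrder f_1f_2$ gives $u_{f''}\newMonomialOrder u_{f_1}u_{f_2}$. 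Second, ``$M=V_n$ is excluded because $f\notin\mF_0$'' is a non sequitur: the span of $\mF_0$ is not $G$--stable (Remark \ref{remark_controesempio}), so membership of $f$ in a set of basis vectors says nothing about the $G$--submodule $M$ it generates. The correct argument is that $\mathrm{span}\,\mF_3\subset\ker r$, and $\ker r$ is $G$--stable because $\calR$ is $G$--stable; hence for $f\in\mF_1$ of $C$--degree $n$ with $1\le n\le\ell$ the module $M$ generated by $f$ lies in $U_n$, and $M\ne V_n$ because $Z^*_n=V_n\oplus U_n$ (multiplicity one of $V_{\gre_n}^*$ in $Z^*_n$ being itself forced by the ``only if'' part of \hypSix). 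With these two repairs your degree bookkeeping, and with it both (i) and (ii), goes through; note that the paper's own one--line claim $\grado(\hat P^1)<\grado(\gre_i+\gre_j)$ is equally silent about the $f_0$ case.
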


\begin{proof}
Let $f_1\in \mF_0(\gre_i)$ and $f_2\in \mF_0(\gre_j)$. The relations $R_{f_1,f_2}$ are $T_K$--homogeneous. By \hypSix\ this implies that $\hat P^ 0$ is homogeneous w.r.t.\ $\grado$ with degree equal to $\grado(\gre_i+\gre_j) $ and that $\grado(\hat P ^1) < \grado(\gre_i+\gre_j)$.  This immediately implies that $\hat R_{f_1,f_2}$ is a straightening relations w.r.t.\ $\newMonomialOrder$.

Hence the monomials $g_m$ with $m \in \mS\mM_0$ generate $A$ as a $\mk$--vector space. We want to prove that they are a basis. Notice that $\{g_m: m \in \mM_0 \mand \grado(u_m) \leq n \}$ spans $A_{\grado \leq n}$ for every $n \in \mN$ and, using that \rev{the} $\hat R$\rev{'s} are straightening relations, it follows that $\{g_m: m \in \mS\mM_0 \mand \grado(u_m) \leq n \}$ spans $A_{\grado \leq n}$ for every $n \in \mN$. Similarly, since $\calR$ is irreducible, $(\grG_\calR)_{\grado \leq n }$ is spanned by $\{m \in \mS\mM_0 \mand \grado (u_m)\leq n\}$ and, moreover, these are linearly independent elements. Now the assertion follows since $(\grG_\calR)_{\grado \leq n }$ and $A_{\grado \leq n}$ are finite dimensional and, being isomorphic as $G$--modules, they have the same dimension. 
\end{proof}

\rev{
The above proof that the straightening relations hold is, even if stated in a slightly more general setting, the same we gave in our previous paper \cite{CLM} (with Peter Littelmann). As seen above, it implies the existence of the SMT. We want to remark that in \cite{CLM} the proof of the existence of a SMT is independent from the existence of straightening relations, but it is wrong, in the sense that we tacitly assumed that the map $s:\Gamma_\calR\longrightarrow\Gamma_\calF$ is $G$--equivariant. This is not always the case, as pointed out in Remark \ref{remark_controesempio} at the beginning of Subsection \ref{ssec:compatibilita}.
}


\begin{cor}\label{cor:generalDegeneration}
There exists a two steps $\mk^* \times G$--equivariant flat degeneration of $A$ to $\grG_\calR$.
\end{cor}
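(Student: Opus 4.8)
The plan is to build the degeneration as the composition of two Rees algebra (``associated graded'') constructions, one for each of the two $G$--stable ring filtrations on $A$ that are already visible in the proof of Theorem \ref{teo:generalResult}: the filtration by total degree, for which $A'_iA'_j=A'_{i+j}$, and the filtration $A_{\grado\le\bullet}$ by $\grado$, which is a ring filtration by \hypFive. For any such filtration $F^\bullet$ of $A$ by $G$--submodules, the Rees algebra $\bigoplus_nF^n t^n\subseteq A[t]$ is a torsion--free $\mk[t]$--submodule of a polynomial ring, hence flat over $\mA^1$; it carries the evident $\mk^*\times G$--action ($\mk^*$ rescaling $t$, $G$ acting through its action on $A$), its fibre over $t\neq0$ is $A$ and its fibre over $t=0$ is the associated graded ring $\mathrm{gr}_FA$.

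First I would degenerate $A$ to $\mathrm{gr}_{A'}A=\bigoplus_jA_j$; this is the $\mN$--graded ring turning the quasi--affine $X$ into an affine cone (and it is the identity step precisely when the straightening relations are homogeneous for the total degree), and by \hypTwo\ one has $\mathrm{gr}_{A'}A\simeq\bigoplus_j\grG(\calR,\calL^j)=\grG_\calR$ as graded $G$--modules. Then I would run the $\grado$--degeneration on this graded ring and identify its central fibre with $\grG_\calR$ as a ring through the presentations. By Theorem \ref{teo:generalResult} we have $A=\mk[u]/(\hat R_{f_1,f_2})$ with $\hat R_{f_1,f_2}=u_{f_1}u_{f_2}-\hat P^0_{f_1,f_2}-\hat P^1_{f_1,f_2}$ for $f_1\in\mF_0(\gre_i)$, $f_2\in\mF_0(\gre_j)$, where $\hat P^0_{f_1,f_2}$ is $\grado$--homogeneous of degree $\grado(\gre_i)+\grado(\gre_j)$ while $\grado(\hat P^1_{f_1,f_2})$ is strictly smaller; hence the image of $\hat R_{f_1,f_2}$ in the associated graded of the composite filtration is $u_{f_1}u_{f_2}-\hat P^0_{f_1,f_2}$, and, the $\hat R$'s being a straightening set --- so a Gr\"obner basis for the order $\newMonomialOrder$ whose first tier is $\grado$ --- these leading forms generate the whole initial ideal. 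On the other side, the SMT of $\calR=\calS_0$ furnished by part (ii) of Theorem \ref{thm:SMTFlagSchubert} has as straightening relations the $r(R_{f_1,f_2})$, and since $\mF_3=\mF_2\senza\mF_0$ consists of degree--one sections of $\grG_\calF$ vanishing on $\calR$ we get $r(P^1_{f_1,f_2})=0$, whence $r(R_{f_1,f_2})=u_{f_1}u_{f_2}-\hat P^0_{f_1,f_2}$ as well; therefore the central fibre is $\mk[u]/(u_{f_1}u_{f_2}-\hat P^0_{f_1,f_2})=\grG_\calR$, $G$--equivariantly. Composing the two Rees families over $\mA^1$ gives the two--step degeneration.

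The step I expect to be the main obstacle is precisely the claim that these $\grado$--leading forms generate the relevant initial ideal, equivalently that the standard monomials $\{g_m:m\in\mS\mM_0\}$ remain linearly independent after the passage to the associated graded of the composite filtration. As in the proof of Theorem \ref{teo:generalResult} this should be forced by the dimension equality $\dim A_{\grado\le n}=\dim(\grG_\calR)_{\grado\le n}$ coming from the $G$--module isomorphism \hypTwo; once it is in hand, the flatness of the Rees algebras over $\mA^1$ and the $\mk^*\times G$--equivariance are routine.
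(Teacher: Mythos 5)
Your Rees--algebra mechanics (torsion--freeness over $\mk[t]$, hence flatness, and equivariance for filtrations by $G$--submodules) are fine; the gap is in the identification of the special fibre, and it sits exactly where the ``two steps'' of the statement come from. Your composite filtration has \emph{total degree} as its first tier, but you compute the leading form of $\hat R_{f_1,f_2}$ using only $\grado$. The relation $\hat R_{f_1,f_2}=u_{f_1}u_{f_2}-\hat P^0_{f_1,f_2}-\hat P^1_{f_1,f_2}$ is not homogeneous for total degree: $\hat P^1$ is obtained by substituting $F_f(u)$ for the variables $f\in\mF_1$, and $F_f$ has total degree $\sum_i a_i$ when $\grl_f=\sum_i a_i\gre_i$, which \hypOne--\hypSix\ allow to be $\geq 2$ (e.g.\ $\grl_f=2\gre_1$ inside some $Z^*_n$ with $n\geq 3$ is not excluded). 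So $\hat P^1$ may contain monomials of total degree $\geq 3$, in which case the total--degree--leading part of $\hat R_{f_1,f_2}$ is a piece of $\hat P^1$ and not $u_{f_1}u_{f_2}-\hat P^0_{f_1,f_2}$; your dimension--count fallback addresses whether the claimed initial forms generate the initial ideal, not whether they are the initial forms at all. Nor can you retreat to a single $\grado$--step: the element you never discuss, $f_0$, has $\grado(\grl_{f_0})=0=ht_0(f_0)$, so a monomial $f_0f'$ of $P^1$ with $f'\in\mF_0(\gre_{i+j})$ substitutes to $c\,u_{f'}$ of $\grado$--degree exactly $\grado(\gre_i)+\grado(\gre_j)$; such terms are not killed by any $\grado$--weighted $\mk^*$--action (in the proof of Theorem \ref{teo:generalResult} they are harmless only because of the $\monomialOrder$ tiebreak inside $\newMonomialOrder$, and a one--parameter degeneration has no tiebreak).

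This $f_0$--phenomenon is precisely what the paper's proof is organized around: keeping the auxiliary variables $v_f$, $f\in\mF_1$, in the Pl\"ucker presentation, one first degenerates along $v_f\mapsto t^{\chi_f}F_f(u)$ with $\chi_f=n-\grado(\grl_f)>0$ for all $f\in\mF_1\senza\{f_0\}$ but $\chi_{f_0}=0$; the special fibre is therefore not $\grG_\calR$ but $B=\grG_\calS/(f_0-c)$ with $c=\grf(f_0)$ constant, and a second, different $\mk^*$--action rescaling $f_0$ then degenerates $B$ to $\grG_\calS/(f_0)=\grG_\calR$, flatness being checked against \hypTwo. So the paper's two steps are ``kill $\mF_1\senza\{f_0\}$, then kill $f_0$'', not ``total degree, then $\grado$''. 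To salvage your route you would have to prove that no monomial of $\hat P^1$ has total degree $>2$ (this does not follow from the stated hypotheses) and then run the spanning/dimension argument bigraded; note also that, contrary to your justification, the order of your two tiers would then be essential, since the top--$\grado$ terms coming from $f_0$ are disposed of only by the total--degree tier, while the remaining $\mF_1$--terms are disposed of only by the $\grado$ tier.
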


\begin{proof}
\rev{Recall that} $\calS \rev{=} \overline {B_K \calR}$ \rev{is} the smallest Schubert variety containing $\calR$ and $\grG_\calS=\oplus_{n\geq0}\Gamma(\mathcal{S},\mathcal{L})$ \rev{is} its ring of coordinates. Since $\calR$ is a ridge Schubert variety we know that $\grG_\calR=\grG_\calS/(f_0)$. The function $\grf(f_0)$ is $G$--invariant (since $G\subset Q$) so it is a constant function; let it be equal to $c\in\mk$. Let $B\doteq\grG_\calS/(f_0-c)$. It is clear that the ring $B$ can be deformed to $\grG_\Rich$ in a flat and $\mk^*\times G$--equivariant way. We exhibit now a $\mk^* \times G$--equivariant deformation of $A$ to $B$. If $f \in \mF_1$ let $v_f$ be a new variable and if $f \in Z^*_n$ set either $\chi_f \doteq n - \grado(\grl_f)$ if $\grl_f \in \Omega_A^+ $ and $\chi_f\doteq0$ otherwise (for the definition of $\grl_f$ see the paragraph before Remark \ref{oss:calcolo}). By \hypSix, $\chi_f>0 $ for all $f\in \mF_1\senza\{f_0\}$.

Consider now the polynomial ring $\mk[u,v,t]$ in the variables $u_f$ with $f\in \mF_0$, $v_f$ with $f \in \mF_1$ and $t$. For $f_1,f_2\in \mF_0$ not $\tabr$ comparable let $R'_{f_1,f_2}(u,v)$ be obtained from $R_{f_1,f_2}$ by replacing $f$ with either $u_f$ if $f\in \mF_0$ or with $v_f$ if otherwise $f\in \mF_1$. Consider the quotient $D$ of $\mk[u,v,t]$ by the ideal generated by the polynomials $R'_{f_1,f_2}(u,v)$ for $f_1,f_2\in \mF_0$ not $\tabr$ comparable and by the polynomials $v_f - t^{\chi_f}F_f(u)$ for $f \in \mF_1$. Notice that on $D$ there is an action of $G$, indeed the ideal generated by the polynomials $R'$ is $G$--invariant since $\calR$ is $G$--stable and the ideal generated by relations $v_f - t^{\chi_f}F_f(u)$ is invariant since $F_f$ is induced by the $G$--equivariant projection $\sfS^{a_1}(V^*_{\gre_1})\otimes \cdots \otimes \sfS^{a_\ell}(V^*_{\gre_\ell}) \lra V_{\grl_f}^*$. On $D$ there is also an action of $\mk^*$ defined by $z\cdot u_f \doteq \eta(z)\, u_f $, $z\cdot v_f \doteq z^n\, v_f $ if $f \in Z^*_n\rev{\supseteq Z_\eta^*}$ and $z \cdot t \doteq z\, t$. This action commutes with that of $G$.

Finally for $a \in \rev{\mk}$ let $D_a \doteq D/(\rev{t-a})$ and notice that $D_0 \isocan B$ and $D_\rev{1}\isocan A$ by the previous \rev{Theorem}.

The flatness of the deformation follows by \hypTwo.
\end{proof}

\section{The case of model varieties}\label{sec:modello}

In this section we explain how to construct $K$ and $\calR$ for the model varieties of $\SL(n)$, $\SO(2n+1)$ and $\Sp(2n)$. The construction we are going to see is a generalization of the one we gave in \cite{CLM} for symmetric varieties with restricted root system of types $\sfA$, $\sfB$, $\sfC$ or $\sfB\sfC$. This approach has some general pattern that we want to make evident here and we hope that, with few variations, it could be applied also in other cases.

Let us recall our construction for symmetric varieties whose restricted root systems is of type $\sfA$, $\sfB$, $\sfB\sfC$ or $\sfC$, in the context of the general approach of this paper. In \cite{CLM} we used the following two main properties for the construction of $K$ and $\calR$. If we properly order the weights $\gre_1,\dots,\gre_\ell$ then
\begin{enumerate}
\item[\symOne:](Lemma~41 in \cite{CLM}) For $1\leq i\leq \ell-1$ there exists $u_i\in W$ such that $u_i\gre_i=\gre_{i+1}-\gre_1$. In particular\rev{, as in the proof of Lemma~41 in \cite{CLM},} $V_{\gre_{i+1}}$ appears with multiplicity one in $V_{\gre_i}\otimes V_{\gre_1}$ ;
\item[\symTwo:](Corollary~20 in \cite{CLM}) $V^*_{\gre_{i+1}}\subset V^*_{\gre_1}\cdot V^*_{\gre_i}$ in the coordinate ring $A$ of the symmetric variety, for $i=1,\dots,\ell-1$.
\end{enumerate}

\rev{
The first property allows to define a ridge Richardson variety with the required properties by setting $w_0\doteq\id$, $w_1\doteq s_{\gra_0}$ and $w_{i+1}\doteq s_{\gra_0} u_iw_i$ for $i=1,2,\ldots,\ell-1$.

By \symTwo\ all ``fundamental'' spherical representations can be iteratively constructed by taking the tensor product with the representation $V_{\gre_1}$. This is the key point which allows us to construct the Dynkin diagram of the group $K$ by adding a single node (associated to the root $\gra_0$) to the diagram of $G$. As we will see later, the analogous property for model varieties is more complicated (see property \modTwo\ in Lemma \ref{lem:Model123}) and this will force us to add two nodes in order to have the Dynkin diagram of the right group $K$.
}

We want now to briefly see how the various hypotheses \hypOne,...,\hypSix\ are proved in \cite{CLM}. \hypOne\ follows by the description of spherical weights in Section~1 in \cite{CLM}. The hypotheses \hypTwo, \hypThree\ and \hypFour\ follow by Corollary~40, Lemma~43 and Theorem~42 \rev{(see also the proof of Theorem~44)} respectively in \cite{CLM}. For \hypFive\ one may look at the remark before Proposition~35 at page~322 in \cite{CLM} since, with notation as in \hypFive, $\lambda+\mu-\nu$ is a rational \rev{non negative} linear combination of simple roots of the restricted root system. Finally \hypSix\ follows by Corollary~37 in \cite{CLM}.

In the case of model varieties property \symOne\ holds without any change while property \symTwo\ is not true any more. Before giving the details of the construction of $K$ and $\calR$ for model varieties we recall some properties of these varieties.

\subsection{Model varieties} \label{ssec:modello}

A homogeneous model variety for a semisimple group $\bar G$ is an homogeneous quasi affine variety whose coordinate ring is the sum of all irreducible representations of $\bar G$ with multiplicity one. These varieties were studied by Bernstein, Gelfand, Gelfand and Zelevinsky in \cite{Gel1,Gel2,Gel3}. In particular for a homogeneous model variety $\bar G/\bar H$ we have $\Omega_A^+=\Lambda_{\bar G}^+$.

As proved in \cite{Luna}, for a simple group $\bar G$ of rank $r$ there exist $2^r$ or $2^{r-1}$ (depending on the group $G$ see \cite{Luna}, page~293) homogeneous model varieties up to isomorphism and there exists exactly one model variety such that all the other \rev{ones} are degenerations of this variety. We want to describe a Pl\"uckerization of the coordinate ring of this variety in the cases $\bar G=\SL(\ell+1)$, $\bar G=\SO(2\ell+1)$ and $\bar G=\Sp(2\ell)$. We denote by $G$ the universal cover of $\bar G$ and by $H$ the inverse image of $\bar H$ in $G$ so that $X=G/H=\bar G/\bar H$. If we order the Dynkin diagram of $G$ as in Bourbaki \rev{\cite{bourbaki}}, we can choose a basis of $\Omega_A^+$ as follows: $\gre_i\doteq\omega_i$ for all $i$ but $i=\ell$ and $\rev{\bar G}=\SO(2\ell +1)$ where we set $\gre_\ell\doteq 2\omega_\ell$. In particular \hypOne\ is satisfied.

We denote by $h_i$ an $H$--invariant non\rev{-}zero element of $V_{\gre_i}$. We now describe $\bar H$, and $h_i$ case by case.

\subsubsection{$\SL(2n+1)$} Here $\ell=2n$ is even. Let $W$ be an $n$--dimensional vector space and set $V\doteq W\oplus W^*\oplus \mC \,v$. Let $a_W$ be the standard symplectic form on $W\oplus W^*$ and extend it trivially to $V$. In this case $G=\SL(V)$, $H=\{g\in G\st g(v)=v$ and $g\cdot a_W=a_W\}$.

We have $h_1 =v$, $h_2= a_W\in \rev{\bigwedge}^2 V$ , $h_{2i}=h_2^{\wedge i}$ for $i=1,2,\ldots,n$, and $h_{2i+1}=h_1\wedge h_{2i}$ for $i=1,2,\ldots,n-1$.

Notice also that in this case the model variety $G/H$ is isomorphic to the symmetric variety $\SL(2n+2)/\Sp(2n+2)$.

\subsubsection{$\SL(2n)$} Here $\ell=2n-1$ is odd. Let $V$ be a $2n$--dimensional vector space and fix a non degenerate symplectic form $a_V \in \rev{\bigwedge}^2 V$ and a non\rev{-}zero vector $v\in V$. Then $H=\{g\in\SL(2n): g v=v$ and $g \cdot a_V= a_V\}$.

We have $h_1 =v$, $h_2=a_V\in \rev{\bigwedge}^2 V$, $h_{2i}=h_2^{\wedge i}$ for $i=1,2,\ldots,n-1$, and $h_{2i+1}=h_1\wedge h_{2i}$ for $i=1,2,\ldots,n-1$.

\subsubsection{$\SO(2\ell+1)$} Let $W$ be an $\ell$--dimensional vector space and set $V\doteq W\oplus W^*\oplus \mC \,v$. On $V$ define a non degenerate bilinear symmetric form $b_V$ such that $W$ and $W^*$ are orthogonal to $v$ and such that on $W\oplus W^*$ the symmetric form is induced by the pairing between $W$ and $W^*$.  Let also $a_W$ be the standard non degenerate symplectic form on $W\oplus W^*$ and extend it trivially to $V$.  In this case we have $\bar G=\SO(V,b_V)$ and $\bar H=\GL(W)$ acting naturally on $W\oplus W^*$ and trivially on $v$.

Notice that we have $V_{\gre_i}=\rev{\bigwedge}^i V$, for $i=1,2,\ldots,\ell$, and also $h_1 =v$, $h_2=a_W\in \rev{\bigwedge}^2 V$, $ h_{2i}=h_2^{\wedge i}$, for $i=1,2,\ldots,$ and $h_{2i+1}=h_1\wedge h_{2i}$ for $i=1,2,\ldots$

\subsubsection{$\Sp(2\ell)$} If $\ell$ is even let $W_1$ and $W_2$ be two $\ell$--dimensional vector spaces while, if $\ell$ is odd, let $W_1$ and $W_2$ be two vector spaces of dimension $\ell+1$ and $\ell-1$, respectively. Fix two symplectic forms $a_{W_1}$ and $a_{W_2}$ on $W_1$ and $W_2$; let $V\doteq W_1\oplus W_2$, $a_V \doteq a_{W_1} + a_{W_2}$ and $G\doteq\Sp(V,a_V)$. Choose a non\rev{-}zero vector $v$ in $W_1$ and define $H_1\doteq\{g\in\Sp(a_{W_1}): g(v)=v\}$ and $H_2\doteq\Sp(a_{W_2})$. Then $H=H_1\times H_2$.

Notice that $\rev{\bigwedge}^i V \isocan a_V \wedge \rev{\bigwedge}^{i-\rev{2}} V \oplus V_{\gre_i}$, for $i=1,2,\ldots,\ell-1$; we denote by $\pi_i:\rev{\bigwedge}^i\rev{V}\lra V_{\gre_i}$ the projection onto the second factor.

We have $h_1 =v$, $h_2=\pi_2(a_{W_1})$, $h_{2i}=\pi_{2i}(a_{W_1}^{\wedge i})$, for $i=1,2,\ldots$, and $h_{2i+1}=\pi_{2i+1}(h_1 \wedge a_{W_1}^{\wedge i})$ for $i=1,2,\ldots$

\bigskip

We have the following properties similar to \symOne\ and \symTwo.

\begin{lem}\label{lem:Model123}
If $G/H$ is one of the above described model varieties then
\begin{enumerate}
\item[\modOne:] $V_{\gre_i}\subset V_{\gre_1}^{\otimes i}$ with multiplicity one and $V_{\gre_i} \not\subset V_{\gre_1}^{\otimes j}$ for $j<i$;
\item[\modTwo:] $V^*_{\gre_{i+2}}\subset V^*_{\gre_2}\cdot V^*_{\gre_i}$ and $V^*_{\gre_{2i+1}}\subset V^*_{\gre_1}\cdot V^*_{\gre_{2i}}$ in the coordinate ring $A$;
\item[\modThree:] for all $i$ there exists an element $u_i$ in the Weyl group $W$ such that $u_i \gre_i=\gre_{i+1}-\gre_1$\rev{; m}ore precisely we may choose $u_i=s_1s_2\cdots s_i$.
\end{enumerate}
\end{lem}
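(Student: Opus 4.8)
The plan is to prove the three properties \modOne, \modTwo, \modThree\ by exploiting the very explicit descriptions of $H$ and of the invariant vectors $h_i$ given above, handling the four families $\SL(2n+1)$, $\SL(2n)$, $\SO(2\ell+1)$ and $\Sp(2\ell)$ in parallel whenever possible. The key observation is that in all cases the $h_i$ are built from $h_1=v$ and $h_2$ (a symplectic form, viewed in $\bigwedge^2 V$) by wedging and, in the $\Sp$ case, projecting; this is exactly the combinatorial structure that \modOne\ and \modTwo\ encode. Since $G/H$ is spherical with $\Omega_A^+=\Lambda^+$, multiplicities are automatically at most one, so most statements reduce to showing a certain representation is \emph{nonzero} in a tensor product or in a product of subspaces of $A$.

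First I would prove \modThree, which is purely a statement about the Weyl group $W$ of $G$ acting on weights, independent of $H$. Writing $\gre_i=\omega_i$ (resp.\ $2\omega_\ell$ in the last $\SO$ case) in the standard Bourbaki coordinates $\varepsilon_1,\dots$, one has $\omega_i=\varepsilon_1+\cdots+\varepsilon_i$ in types $\sfA$, $\sfB$, $\sfC$ (with the usual caveat for the last fundamental weight), and $s_1s_2\cdots s_i$ is the cycle sending $\varepsilon_1\mapsto\varepsilon_2\mapsto\cdots\mapsto\varepsilon_{i+1}\mapsto\varepsilon_1$ on indices $\le i+1$ (and the sign twist in type $\sfB/\sfC$ only affects $\varepsilon_{\ell}$, which does not enter here for $i<\ell$); a direct check gives $s_1\cdots s_i(\varepsilon_1+\cdots+\varepsilon_i)=\varepsilon_2+\cdots+\varepsilon_{i+1}=\omega_{i+1}-\omega_1$, i.e.\ $u_i\gre_i=\gre_{i+1}-\gre_1$. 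One must separately verify the boundary cases $i=\ell-1,\ell$ in each type (and the doubling $\gre_\ell=2\omega_\ell$ for $\SO(2\ell+1)$), which is a finite bookkeeping task.

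Next I would prove \modOne. The inclusion $V_{\gre_i}\subset V_{\gre_1}^{\otimes i}$ with multiplicity one: for $\SL$ and $\SO$ one has $V_{\gre_i}=\bigwedge^i V\subset V^{\otimes i}$ and $\bigwedge^i V$ is $G$--irreducible (for $\SO(2\ell+1)$, $\bigwedge^i V$ is irreducible for all $i\le\ell$), so multiplicity one is clear; for $\Sp(2\ell)$ one uses $V_{\gre_i}=\pi_i(\bigwedge^i V)$, the irreducible summand of $\bigwedge^i V\subset V^{\otimes i}$, again with multiplicity one by the classical branching/highest-weight argument. The nonvanishing part $V_{\gre_i}\not\subset V_{\gre_1}^{\otimes j}$ for $j<i$ follows from a highest-weight count: the highest weight of $V_{\gre_i}$ is $\gre_i$, a sum of $i$ distinct fundamental characters $\varepsilon_k$, which cannot appear as a weight of $V^{\otimes j}$ for $j<i$ since every weight there is a sum of $\le j$ characters $\pm\varepsilon_k$ — so $\gre_i$ is not even a weight of $V^{\otimes j}$.

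Then I would prove \modTwo, which is where the structure of $A$ as $\bigoplus_\lambda V_\lambda^*$ and the explicit $h_i$ really enter; this is the main obstacle. The point is that $V_\nu^*\subset V_\lambda^*\cdot V_\mu^*$ in $A$ precisely when, dually, the $H$--invariant $h_\nu\in V_\nu$ lies in the image of the $G$--projection $V_\lambda\otimes V_\mu\to V_\nu$ of a tensor $h_\lambda\otimes h_\mu$ of invariants — more concretely, one checks that the natural map realizing $V_{\gre_{i+2}}$ as a summand of $V_{\gre_2}\otimes V_{\gre_i}$ sends (a suitable multiple of) $h_2\otimes h_i$ to a nonzero multiple of $h_{i+2}$, and similarly $h_1\otimes h_{2i}\mapsto h_{2i+1}$. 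Given the recursive formulas $h_{2i}=h_2^{\wedge i}$, $h_{2i+1}=h_1\wedge h_{2i}$ (with $\pi_\bullet$ inserted in the $\Sp$ case), this amounts to: (a) the wedge product $\bigwedge^2 V\otimes\bigwedge^i V\to\bigwedge^{i+2}V$ restricted to the irreducible summands $V_{\gre_2}\otimes V_{\gre_i}\to V_{\gre_{i+2}}$ is nonzero on $h_2\otimes h_i$, and likewise $V\otimes\bigwedge^{2i}V\to\bigwedge^{2i+1}V$; and (b) for $\Sp$, that composing with the projections $\pi$ does not kill the relevant image — here one uses that $h_{i+2}=\pi_{i+2}(h_2^{\wedge(i+2)/?})$ is by construction the $\pi$--image of the wedge, so nonvanishing is automatic provided the wedge itself is nonzero, which follows because $h_2=a_{W_1}$ is a nonzero $2$--form and its powers $a_{W_1}^{\wedge i}$ are nonzero as long as $2i\le\dim W_1$, and $v\wedge a_{W_1}^{\wedge i}\neq0$ since $v\in W_1$ is isotropic but $a_{W_1}$ is nondegenerate (so $v\wedge a_{W_1}^{\wedge i}\ne 0$ whenever $2i+1\le\dim W_1$). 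One also must track the index ranges so that all the $h_i$ occurring are genuinely nonzero (e.g.\ $a_{W_1}^{\wedge i}\ne0$ for the relevant $i$), which is guaranteed by the choices of $\dim W_1,\dim W_2$ made above. Assembling these case-by-case nonvanishing checks, each reducing to elementary multilinear algebra for symplectic/orthogonal forms, yields \modTwo\ and completes the lemma.
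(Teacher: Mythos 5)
Your proposal is correct and follows essentially the same route as the paper: there \modOne\ and \modThree\ are dismissed as straightforward computations, and \modTwo\ is proved exactly as you do, via the criterion that $V^*_\nu\subset V^*_\lambda\cdot V^*_\mu$ in $A$ if and only if the projection $p$ onto the $V_\nu$--isotypical component satisfies $p(V^H_\lambda\otimes V^H_\mu)\neq 0$, with $p$ realized by the wedge product (composed with $\pi_{i+j}$ in the $\Sp(2\ell)$ case) and evaluated on the explicit invariants $h_i$. The only caveat is your claim that in the $\Sp$ case ``nonvanishing is automatic provided the wedge itself is nonzero'': $\pi_{i+2}$ does kill nonzero wedges in general, so the point is rather that $\pi_{i+2}(a_{W_1}\wedge\,\cdot\,)$ applied to the relevant element is, by construction, the vector $h_{i+2}$ itself, which is nonzero because it is the $H$--invariant vector described before the lemma --- not because the wedge $a_{W_1}^{\wedge k}$ or $v\wedge a_{W_1}^{\wedge k}$ is nonzero in $\bigwedge^{i+2}V$.
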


\begin{proof}
Properties \modOne\ and \modThree\ are a straightforward computation. 

In order to prove property \modTwo\ recall that if $\grl,\mu,\nu \in \Omega^+_A$ and $p:V_\lambda \otimes V_\mu \lra W$ is the projection on the \rev{isotypical} component of type $V_\nu$, then the condition $V^*_\nu \subset V^*_\lambda \cdot V_\mu^*$ in the coordinate ring of $G/H$ is equivalent to $p(V^H_\lambda\otimes V^H_\mu )\neq 0$.

Notice also that for $\SL(\ell+1)$ and $\SO(2\ell+1)$ and $\lambda=\gre_i$, $\mu=\gre_j$ and $\nu=\gre_{i+j}$ with $i+j\leq \ell$, the projection $p$ is given by $p(x\otimes y)= x\wedge y$; further for $\Sp(2\ell)$ and $\lambda=\gre_i$, $\mu=\gre_j$ and $\nu=\gre_{i+j}$ with $i+j\leq \ell$ the projection $p$ is given by $p(\pi_i(x)\otimes \pi_j(y))= \pi_{i+j}(x\wedge y)$. Hence property \modTwo\ follows from the description of the elements $h_i$ given above.
\end{proof}

\rev{
Property \modTwo\ should be seen as the analogue of the property \symTwo\ for symmetric varieties. Both properties allow us to order the generators of the monoid of spherical weights in a suitable way.  We notice also that properties \symTwo\ and \modTwo\ are related to the projective normality of wonderful varieties. Indeed in \cite{CLM}, the property \symTwo\ is proved using the results in \cite{CM} about the projective normality of complete symmetric varieties. Projective normality for model varieties has been studied in \cite{BGM}. In this case, projective normality fails for the model varieties of $\sf{SO}(2r+1)$; however, here we need only a weaker property.
}

\subsection{Construction of $K$}

Let $\Delta\rev{\doteq}\{\gra_1,\dots,\gra_\ell\}$ be the set of simple roots of $\gog$ numbered as in Bourbaki \rev{\cite{bourbaki}}. For the construction of $K$ we consider the Dynkin diagram of $G$ and we add two nodes so that $\Delta_0=\{\gra_0,\grb_0\}$. This choice is suggested by the fact that, by property \modTwo, $h_1$ and $h_2$ generate all the $H$--invariants. We connect both nodes to the node corresponding to the simple root $\gra_1$. (However\rev{,} in the next Section we see an example where we make a different choice.) Notice that $K$ is symmetrizable.

Let $L$ be the Levi whose simple roots are given by $\Delta$ and let $C$ be its center, as in Section \ref{sec:pluckerization}. For an element $\eta \in \mZ[\Delta_0]$ we can consider the associated eigenspace $\goK_\eta$ in $\goK$ for the action of $C$ as we have explained in Section \ref{sec:pluckerization} for the module $Z^*$. We define $\goK^-$ as the sum of the eigenspaces $\goK_\eta$ for $\eta \in -\mN[\Delta_0]$.

The inclusion $\Delta\subset\Delta_K$ determines an inclusion of $\Lambda$ into $\Lambda_K$. However\rev{,} this inclusion does not send fundamental weights to fundamental weights. We denote by $\omega_1,\dots,\omega_\ell \in \Lambda$ the fundamental weights of $\gog$ w.r.t.\ $\Delta$ and by $\tom_{\gra_0},\tom_{\grb_0},\tom_1,\dots,\tom_\ell \in \Lambda_K$ a choice of fundamental weights for $\Delta_K$ as explained in Section \ref{sec:simboli}. If $\grl=\sum a_i \omega_i$ is an element of $\Lambda$ we denote by $\tilde \lambda \in \Lambda_K$ the element $\sum_i a_i\tom_i$.

\begin{lem}\label{lem:K}\hfill

\begin{enumerate}[\indent i)]
\item $\goK_0 = \gog + \got_K$;
\item $\goK^-$ is generated by $\goK_{-\gra_0}$ and $\goK_{-\grb_0}$;
\item $\goK_{-\eta}\isocan \goK_\eta^*$ as a $G$--module.
\item $\goK_{-\gra_0} \isocan \goK_{-\grb_0}\isocan V_{\gre_1}$ as a $G$--module;
\item the $G$--module $V_{\gre_2}$ appears in $\goK_{-\gra_0-\grb_0}$ with multiplicity one.
\end{enumerate}
\end{lem}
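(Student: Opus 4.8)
The plan is to analyze the $C$-action on the Kac-Moody Lie algebra $\goK$ directly from the Chevalley-type generators and relations, using that $\Delta_0 = \{\gra_0,\grb_0\}$ consists of exactly two simple roots, each attached only to the node $\gra_1$ of the Dynkin diagram of $G$. Throughout I would use the triangular decomposition $\goK = \gon_K^- \oplus \got_K \oplus \gon_K^+$ and the root space decomposition; the grading by $\eta \in \mZ[\Delta_0]$ is just the grading induced by recording the $\gra_0$- and $\grb_0$-coefficients of a root.

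For \emph{i)}, I would observe that $\goK_0$ is spanned by $\got_K$ together with all root spaces $\goK_\gamma$ where $\gamma$ has zero $\gra_0$- and $\grb_0$-coefficient; since $\Delta_K = \Delta \sqcup \{\gra_0,\grb_0\}$, such roots $\gamma$ lie in the root lattice of $\Delta$, and by standard Kac-Moody theory (the subalgebra generated by $e_\gra, f_\gra, \gra\cech$ for $\gra \in \Delta$, which has finite type $\gog$) these are precisely the roots of $\gog$. Hence $\goK_0 = \gog + \got_K$. For \emph{ii)}, the statement that $\goK^-$ is generated by $\goK_{-\gra_0}$ and $\goK_{-\grb_0}$ as a Lie algebra (over $\goK_0$, or equivalently as a $\gog$-module-generated Lie algebra): I would argue that $\gon_K^-$ is generated by $f_{\gra}$, $\gra \in \Delta_K$, and the components with strictly negative $\Delta_0$-degree are generated, under bracketing, by those $f_\gra$ with $\gra \in \Delta_0$, i.e.\ $f_{\gra_0}, f_{\grb_0}$, together with the $\Delta$-part which only shifts degree within a fixed $\Delta_0$-degree stratum; the key point is that iterated brackets $[\goK_0, -]$ starting from $\goK_{-\gra_0} + \goK_{-\grb_0}$ exhaust everything, which follows because any negative root with negative $\Delta_0$-degree can be written as $-\gra_0$ or $-\grb_0$ plus a sum of negative roots, by a standard height induction on the $\Delta_0$-degree and the observation that removing an $\gra_0$ or $\grb_0$ from such a root keeps us in the root system.

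For \emph{iii)}, I would invoke the existence of a $\goK$-invariant bilinear form (the symmetrizable case), or more simply the Chevalley involution $\omega$ swapping $e_\gra \leftrightarrow -f_\gra$ and acting as $-1$ on $\got_K$: it sends $\goK_\eta$ to $\goK_{-\eta}$ and commutes with the adjoint $\gog$-action up to the Chevalley involution of $\gog$, which identifies a $\gog$-module with its dual; combined with the duality from the invariant form this gives $\goK_{-\eta} \isocan \goK_\eta^*$ as $G$-modules. For \emph{iv)}, $\goK_{-\gra_0}$ is the $\gog$-submodule of $\gon_K^-$ generated by $f_{\gra_0}$; its lowest-weight vector is $f_{\gra_0}$ of $\got$-weight $-\gra_0|_{\got}$, and I would compute $\langle \gra_i\cech, \gra_0\rangle$ from the added edge (only $i=1$ gives $-1$, all others $0$), which identifies the $\gog$-highest weight of this module as $\omega_1 = \gre_1$, i.e.\ $\goK_{-\gra_0} \isocan V_{\gre_1}$; the same computation works for $\grb_0$. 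I must also check irreducibility — that $f_{\gra_0}$ generates an irreducible finite-dimensional $\gog$-module and not something larger — which follows since $\ad(e_\gra) f_{\gra_0} = 0$ for all $\gra \in \Delta$ (no bracket relation, as $\gra_0 - \gra$ is never a root) so $f_{\gra_0}$ is a genuine $\gon^+$-highest-weight (equivalently $\gon^-$-lowest-weight) vector, hence generates $V_{\gre_1}$ and nothing more by finite-dimensionality of $V_{\gre_1}$ and the PBW/integrability argument.

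For \emph{v)}, the genuinely delicate part: $\goK_{-\gra_0-\grb_0} = [\goK_{-\gra_0}, \goK_{-\grb_0}]$ by part \emph{ii)} (only way to reach $\Delta_0$-degree $(-1,-1)$), so as a $\gog$-module it is a quotient of $V_{\gre_1} \otimes V_{\gre_1}$; I need to show $V_{\gre_2}$ (which always sits inside $V_{\gre_1}^{\otimes 2}$ by \modOne) survives in this quotient with multiplicity exactly one. Multiplicity at most one is automatic from $\dim \Hom_G(V_{\gre_1}\otimes V_{\gre_1}, V_{\gre_2}) = 1$. For nonvanishing, I would exhibit a specific nonzero bracket: take the lowest weight vectors $f_{\gra_0} \in \goK_{-\gra_0}$ and an appropriate weight vector in $\goK_{-\grb_0}$ and compute $[f_{\gra_0}, -]$ to land on a $\got$-weight vector of weight $\gre_2 - (\text{positive roots})$, or better, directly check that the bracket map is nonzero by a rank computation near the top weight — e.g.\ bracket $f_{\gra_0}$ with $[f_{\gra_1}, f_{\grb_0}]$ (or an iterate) and verify it is nonzero using the Serre relations and the non-degeneracy of the invariant form. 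The main obstacle is precisely this last nonvanishing: a priori the bracket $[\goK_{-\gra_0},\goK_{-\grb_0}]$ could be "small", and ruling out that $V_{\gre_2}$ collapses requires either an explicit computation in $\goK$ near these low-height root spaces or an appeal to the structure theory of the relevant rank-$2$-extended Kac-Moody algebra (the subdiagram spanned by $\gra_0, \grb_0, \gra_1$, which is an affine or indefinite type $\tilde A_1$-like diagram) where the dimensions of low root spaces are known. I would carry out the explicit bracket computation, organizing it by the Serre relations $[\ad f_{\gra_1}]^{?}$ applied to $f_{\gra_0}, f_{\grb_0}$, and conclude by matching $\got$-weights.
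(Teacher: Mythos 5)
Parts i)--iv) of your plan are essentially the paper's own proof: i) and ii) are treated there as immediate, iii) is obtained by a character argument (your invariant-form/Chevalley-involution route is an equivalent variant), and iv) is exactly the paper's argument that $\sff_{\gra_0}$ is a $G$--highest weight vector of weight $\gre_1$ which generates $\goK_{-\gra_0}$ (the reorganization of iterated brackets you allude to). For v), your upper bound is a legitimate alternative to the paper's: since only brackets of the degree $-\gra_0$ and $-\grb_0$ pieces can reach degree $-\gra_0-\grb_0$, the space $\goK_{-\gra_0-\grb_0}$ is a $G$--equivariant quotient of $V_{\gre_1}\otimes V_{\gre_1}$, which contains $V_{\gre_2}$ exactly once; the paper instead bounds the multiplicity by noting that every weight of $\goK_{-\gra_0-\grb_0}$ has the form $-\grg_0-\eta$ with $\eta\in\mN[\Delta]$, where $\grg_0\doteq\gra_0+\grb_0+\gra_1$.

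The gap is the lower bound in v), which you yourself call ``the main obstacle'' and settle only by promising an unspecified Serre-relation computation or an appeal to known low root multiplicities. The paper disposes of it in one line: $\grg_0=\gra_0+\grb_0+\gra_1$ is a \emph{real} root of $\goK$ (it is $s_{\gra_0}s_{\gra_1}(\grb_0)$, using that both new nodes are attached simply to $\gra_1$), so $\goK_{-\grg_0}\neq 0$ and is one-dimensional; a nonzero vector there lies in $\goK_{-\gra_0-\grb_0}$, has $G$--weight $\gre_2$, and is a $G$--highest weight vector because $-\grg_0+\gra_i$ is never a root ($-\gra_0-\grb_0$ has disconnected support, and for $i\geq 2$ the coefficients have mixed signs). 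Your candidate $[\sff_{\gra_0},[\sff_{\gra_1},\sff_{\grb_0}]]$ indeed has weight $-\grg_0$, but you never actually prove it is nonzero; moreover ``matching $\got$--weights'' is not by itself conclusive, since $\gre_2$ is also a weight of $V_{2\gre_1}$, so even after producing a nonzero weight-$\gre_2$ vector you would still need to check either that it is annihilated by all the $e_{\gra_i}$, or that $V_{2\gre_1}$ cannot occur in $\goK_{-\gra_0-\grb_0}$ (its highest weight vector would have $T_K$--weight $-\gra_0-\grb_0$, which is not a root). So the decisive input --- the real-root observation, or a completed substitute for it --- is missing from your write-up, and without it the ``multiplicity at least one'' half of v) is not established.
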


\begin{proof}
$i)$ and $ii)$ are trivial. The $T_K$\rev{--}character of $\goK_\eta$ is obtained from the character of $\goK_{-\eta}$ by composition with the involution $t\mapsto t^{-1}$, so $iii)$ follows.

We prove $iv)$ in the case of $\goK_{-\gra_0}$, the proof for $\goK_{-\grb_0}$ is analogous. Let $\sff_{\gra_0},\sff_{\grb_0},\sff_1,\dots,\sff_\ell$ be root vectors of weight $-\gra_0,-\grb_0,-\gra_1,\dots,-\gra_\ell$. We prove that $\sff_{\gra_0}$ generates $\goK_{-\gra_0}$ as a $G$--module. Indeed $\goK_{-\gra_0}$ is generated by vectors $y$ of the form $[\sff_{i_1},\dots,[\sff_{i_a},[\sff_{\gra_0},[\sff_{j_1},\dots,[\sff_{j_{b-1}},\sff_{j_b}]\cdots]$ where $i_h,j_k\in\{1,\dots \ell\}$. In particular if we set $x\doteq-[\sff_{j_1},\dots,[\sff_{j_{b-1}},\sff_{j_b}]\cdots]$ we have $y=[\sff_{i_1},\dots,[\sff_{i_a},[x,\sff_{\gra_0}]\cdots]$ which is in the $G$--module generated by $\sff_{\gra_0}$. Finally notice that $\sff_{\gra_0}$ is a highest weight vector of weight $\gre_1$ for the action of $G$.

We now prove $v)$. The weight $\grg_0\doteq\gra_0+\grb_0+\gra_1\equiv \tom_{\gra_0}+\tom_{\grb_0}-\tilde\gre_2$ is a real root of $\goK$. Let $y\in \goK$ be a non\rev{-}zero element of weight $-\grg_0$. In particular $y\in \goK_{-\gra_0-\grb_0}$ is a highest weight vector of weight $\gre_2$ for the action of $G$, hence $V_{\gre_2}$ appears in $\goK_{-\gra_0-\grb_0}$ with non\rev{-}zero multiplicity. Moreover all roots $\grg\in \Phi_K$ which appear with non\rev{-}zero multiplicity in the space $\goK_{-\gra_0-\grb_0}$ are of the form $\gamma=-\grg_0-\eta$ with $\eta \in \mN[\Delta]$. Hence all other possible $G$--highest weight vectors in $\goK_{-\gra_0-\grb_0}$ have a weight which is strictly smaller than $\gre_2$ w.r.t.\ dominant order in $\Lambda$. In particular  $V_{\gre_2}$ appears with multiplicity one in $\goK_{-\gra_0-\grb_0}$.
\end{proof}

\subsection{Construction of $\calR$}

Let $Z$ be the irreducible highest weight module of highest weight $\tom_{\gra_0}$. Let $Q$ be the maximal parabolic of $K$ corresponding to the root $\gra_0$. Let $\calF\doteq K/Q$ and let $\calL$ be the line bundle on $\calF$ such that $\Gamma(\calF,\calL)$ is isomorphic to the dual $Z^*$ of $Z$.

Define $w_0\doteq\id$, $w_1\doteq s_{\gra_0}$, $w_{2i}\doteq s_{\grb_0} u_{2i-1}w_{2i-1}$ and $w_{2i+1}\doteq s_{\gra_0} u_{2i}w_{2i}$ for $i=1,2,\ldots$ where $u_1,u_2,\ldots$ are the elements which appear in the above property \modThree. Notice $\rev{w_{2i-1} (\tom_{\gra_0}) \equiv \tilde \gre_{2i-1}-\tom_{\gra_0}}$ and $\rev{w_{2i} (\tom_{\gra_0}) \equiv \tilde \gre_{2i}-\tom_{\grb_0}}$ for $i=1,2,\ldots$. \rev{Further the length requirements $\length(w_i)=1+\length(u_i)+\length(w_{i-1})$, for $i=1,2,\ldots$ of Subsection \ref{ssec:compatibilita} may be easily checked by induction. Indeed, by applying the simple symmetries in $u_i = s_1 s_2\cdots s_i$ to $w_{i-1}(\tom_{\gra_0})$, we obtain a strictly ordered sequence of $i$ weights (with respect to the dominant order).} In particular we can associate \rev{with} $w_0,w_1,\dots,w_\rev{\ell}$ a ridge Richardson variety $\calR$ such that \hypTwo\ is satisfied.

Since $\grado(\gre_i) = i$ we have $\grado(\gra_i)=0$ for $i=1,\dots,\ell-1$ and $\grado(\gra_\ell)>0$.  In particular \hypFive\ is satisfied.

\begin{lem}\label{lem:IP6}\hfill
\begin{enumerate}[\indent i)]
\item \hypSix\ is satisfied;
\item if $V_\grl\subset \goK_{-\gra_0-\grb_0}$ with $\grl\in\Omega^+_A$ and $\grado(\grl)\geq 2$ then $\grl=\gre_2$;
\item if $V_\grl\subset \goK_{-a\gra_0-b\grb_0}$ with $\grl\in\Omega^+_A$ and $\grado(\grl)\geq a+b$ then $(a,b)\in\{(0,1),(1,0),(1,1)\}$.
\end{enumerate}
\end{lem}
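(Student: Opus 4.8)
The plan is to prove the three items together, working from the most refined structural fact towards the weakest. The key observation is Lemma \ref{lem:K}: the $C$--isotypical pieces $\goK_\eta$ of $\goK^-$ are controlled by commutator expressions in the root vectors $\sff_{\gra_0},\sff_{\grb_0}$ and the negative root vectors of $\gog$, and the $T_K$--weights occurring in $\goK_{-a\gra_0-b\grb_0}$ are all of the form $-a\gra_0-b\grb_0-\eta$ with $\eta\in\mN[\Delta]$. Restricting to $\got$ (i.e.\ applying $t\mapsto t^{-1}$ to pass to the dual and recording only the $G$--weight), every $G$--highest weight $\grl$ appearing in $\goK_{-a\gra_0-b\grb_0}$ satisfies $\grl\leq a\gre_1+b\gre_1-(\text{something in }\mN[\Delta])$; more precisely $\grl$ is dominated by the ``top'' $G$--weight of that piece. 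The additivity of $\grado$ together with $\grado(\gra_i)=0$ for $i<\ell$ and $\grado(\gra_\ell)>0$ then translates a dominance statement into an inequality on $\grado$.

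I would first prove $iii)$. By Lemma \ref{lem:K}$ii)$, $\goK^-$ is generated as a Lie algebra by $\goK_{-\gra_0}$ and $\goK_{-\grb_0}$, so $\goK_{-a\gra_0-b\grb_0}=0$ unless it is reached by iterated brackets; and by Lemma \ref{lem:K}$iv)$ both $\goK_{-\gra_0}$ and $\goK_{-\grb_0}$ are isomorphic to $V_{\gre_1}$ as $G$--modules. For a general pair $(a,b)$ the bracket of things built from $\sff_{\gra_0},\sff_{\grb_0}$ and $\gog$ produces, at the level of $G$--highest weights, a weight bounded above (dominant order) by $(a+b)\gre_1$ minus a non-negative combination of $\Delta$. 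The point is that whenever $(a,b)\notin\{(0,1),(1,0),(1,1)\}$ — i.e.\ $a+b\geq 2$ and $(a,b)\neq(1,1)$ — the structure of $K$ (two extra nodes, each attached only to $\gra_1$, with no edge between them) forces at least one bracket of two $\sff_{\gra_0}$'s or two $\sff_{\grb_0}$'s, and since $\gra_0,\grb_0$ are real roots with $2\gra_0,2\grb_0\notin\Phi_K$, such a bracket can be nonzero only after inserting root vectors from $\gog$, which strictly lowers the $G$--weight by elements of $\mN[\Delta]$ including a positive amount of $\gre_\ell$-worth of $\grado$. Quantitatively, any $\grl$ occurring then satisfies $(a+b)\gre_1-\grl\in\mN[\Delta]$ with a strictly positive $\gra_\ell$--coefficient, hence $\grado(\grl)<\grado((a+b)\gre_1)=a+b$, contradicting $\grado(\grl)\geq a+b$. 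This gives $iii)$.

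Next $ii)$ is the special case $(a,b)=(1,1)$ of the analysis: by Lemma \ref{lem:K}$v)$, $V_{\gre_2}$ occurs in $\goK_{-\gra_0-\grb_0}$ with multiplicity one, realized by the real root $\grg_0=\gra_0+\grb_0+\gra_1$, and every other $G$--highest weight there is $\leq\gre_2$ in dominant order via subtraction of a nonzero element of $\mN[\Delta]$. As in the proof of Lemma \ref{lem:K}$v)$, the roots appearing in $\goK_{-\gra_0-\grb_0}$ are $-\grg_0-\eta$, $\eta\in\mN[\Delta]$; a $G$--highest weight $\grl$ with $\grado(\grl)\geq 2=\grado(\gre_2)$ forces $\eta$ to have zero $\gra_\ell$--coefficient, but then $\grl$ and $\gre_2$ differ by a nonnegative combination of $\gra_1,\dots,\gra_{\ell-1}$, and a short check using property \modTwo\ (which says $V_{\gre_2}^*$ is exactly the relevant piece) or directly the multiplicity-one statement pins $\grl=\gre_2$. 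So $ii)$ holds.

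Finally $i)$: I must verify \hypSix, namely that for every $n$ and every $G$--irreducible $M\subset Z^*_n$ with highest weight $\grl\in\Omega^+_A$ one has $\grado(\grl)\leq n$, with equality iff ($1\leq n\leq\ell$ and $M=V_n$) or ($n=0$, $M=\mk f_0$). The module $Z^*=\grG(\calF,\calL)$ has lowest weight $\zeta=-\tom_{\gra_0}$, and $Z^*_\eta$ collects $T_K$--weights $\zeta+\eta+\grg$, $\grg\in\mN[\Delta]$; here $\grado$ of a $G$--weight is computed by additivity after subtracting the $\Delta$--part. Because $Z$ is the highest weight module for $\tom_{\gra_0}$ and $\gra_0$ is attached only to $\gra_1$, the $G$--highest weights in $Z^*_n$ are, up to a nonnegative combination of $\Delta$, bounded by the weights $w_i(\zeta)$ with $ht_0$ matching $n$; this is exactly where the ridge data $w_0,\dots,w_\ell$ and the chain of $u_i$'s from \modThree\ enter — they exhibit the extremal weights $-\grz_i=\tilde\gre_i-\tom_{\gra_0}$ (for $i\leq\ell$) or $\tilde\gre_i-\tom_{\grb_0}$ living in $Z^*_1$, and by Corollary \ref{cor:decR} these give the $V_i$. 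Combining the computation of $\grado$ on $\Delta$ (zero on $\gra_1,\dots,\gra_{\ell-1}$, positive on $\gra_\ell$) with an induction on $n$ using items $ii)$ and $iii)$ to understand how brackets with $\goK^-$ move weights between the $Z^*_n$'s, one gets $\grado(\grl)\leq n$ always, and equality isolates precisely the extremal pieces $V_n$ for $1\leq n\leq\ell$ and $\mk f_0$ for $n=0$. The main obstacle I expect is bookkeeping the equality case of \hypSix: one has to rule out, for $n\leq\ell$, any other $G$--submodule of $Z^*_n$ of weight $\gre_n$ besides $V_n$, which requires the multiplicity-one information coming from sphericity together with the explicit extremal weight computation, and for $n>\ell$ one must show strict inequality $\grado(\grl)<n$ everywhere — this is where the positivity of the $\gra_\ell$--coefficient forced by the geometry of the Dynkin diagram of $K$ must be used carefully.
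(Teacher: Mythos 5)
Your proposal gets the easy half right (every constituent of $Z^*_n$ comes from $n$--fold products of $\goK_{-\gra_0}\cong\goK_{-\grb_0}\cong V_{\gre_1}$, so $\grado(\grl)\leq n$ by \hypFive, and the whole game is controlled by $\grado(\gra_i)=0$ for $i<\ell$, $\grado(\gra_\ell)>0$), but both places where the real work happens are asserted rather than proved, and the tools you name are not the ones that can do the job. For $ii)$ and $iii)$, the crux is: if $\grado(\grl)=a+b$ then the corresponding root $\grd=a\gra_0+b\grb_0+\grg$ has no $\gra_\ell$ in $\grg$, hence lies in the \emph{finite} subsystem spanned by $\gra_0,\grb_0,\gra_1,\dots,\gra_{\ell-1}$, which is of type $\sfD_{\ell+1}$; there the two fork nodes occur in any root with coefficient at most $1$, giving $a,b\leq1$ at once, and the explicit list of positive roots of type $\sfD$ together with the dominance conditions on $\grg$ forces $\grg=\gra_1$, i.e.\ $\grl=\gre_2$, in the case $a=b=1$. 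Your bracket heuristic ("a bracket of two $\sff_{\gra_0}$'s can be nonzero only after inserting $\gog$--root vectors, which costs a positive amount of $\gra_\ell$") is exactly the statement to be proved, not an argument for it: inserting $\gra_1,\dots,\gra_{\ell-1}$ costs nothing in $\grado$, and nothing you say rules out a root $2\gra_0+\grg$ with $\grg\in\mN[\gra_1,\dots,\gra_{\ell-1}]$ except the type--$\sfD$ fact you never invoke. (Also, \modTwo\ is a statement about products in the coordinate ring $A$, not about $\goK_{-\gra_0-\grb_0}$, so it cannot "pin $\grl=\gre_2$".)

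The more serious gap is the equality case of \hypSix, which you yourself flag as "the main obstacle" and then defer. Your proposed induction on $n$ using $ii)$, $iii)$ and \hypFive\ can only show that an equality constituent of $Z^*_n$ must occur in some product $V^*_{\gre_2}{}^{\otimes i}\otimes V^*_{\gre_1}{}^{\otimes j}$ with $2i+j=n$; this does not isolate $V_n$, since e.g.\ $V^*_{2\gre_2}\subset V^*_{\gre_2}\otimes V^*_{\gre_2}$ also attains $\grado=4$, and multiplicity--one from sphericity says nothing about $Z^*_n$, which is not the coordinate ring. The missing input is the structure of $Z^*$ itself: writing $\grl=n\gre_1-\grg$ with $\grg\in\mN[\gra_1,\dots,\gra_{\ell-1}]$, one produces a $T_K$--weight $-\mu$ of $Z^*$ with $\mu\equiv\tilde\grl$ lying in the $L_K$--submodule $U$ generated by the lowest weight vector $f_0$, where $L_K$ is the Levi on $\gra_0,\grb_0,\gra_1,\dots,\gra_{\ell-1}$ of type $\sfD_{\ell+1}$; $U$ is a spin (minuscule) module, so $\mu$ is forced into the single Weyl orbit of $\tom_{\gra_0}$, and the dominance conditions then enumerate the possibilities $\tom_{\gra_0},\ \tom_1-\tom_{\gra_0},\ \tom_2-\tom_{\grb_0},\dots$, yielding $n\leq\ell$, $\grl=\omega_n$, $\mu=w_n\tom_{\gra_0}$ and hence $M=V_n$ (or $\mk f_0$ for $n=0$). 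Without this minuscule--module step — which is the engine of the paper's proof and also what makes your "bounded by the $w_i(\zeta)$" claim meaningful — the equality statement, and with it part $i)$, is not established.
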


\begin{proof}
Notice first that by Lemma \ref{lem:K} $ii)$, $iii)$ and $iv)$ we have a surjective $G$--equivariant homomorphism from $(V_{\gre_1}^*)^{\otimes n}$ to $Z^*_n$.  Hence if $V_\grl^*$ appears in $Z^*_n$ as \rev{a $G$--submodule} then $\grado(\grl)\leq n$.

Let now $M\isocan V^*_\grl$ be a submodule of $Z^*_n$ with $\grado(\grl)=n$. Consider the Levi subgroup $L_K$ whose simple roots are $\gra_0,\grb_0,\gra_1,\dots,\gra_{\ell-1}$ and let $U$ be the $L_K$--submodule of $Z^*$ generated by the lowest weight vector $f_0$. This is an irreducible representation of $L_K$ of lowest weight $-\tom_{\gra_0}$. Consider also the Levi subgroup $L_G$ of $G$ with simple roots $\gra_1,\dots,\gra_{\ell-1}$.

As above, by Lemma \ref{lem:K} $ii)$, $iii)$ and $iv)$, we know that $V^*_\grl$ appears with non\rev{-}zero multiplicity in $(V_{\gre_1}^*)^{\otimes n}$, hence $\grl=n\gre_1 - \grg$ with $\grg\in \mN[\Delta]$. Furthermore since $\grado(\grl)=\grado(n\gre_1)$ and $\grado(\gra_\ell)>0$ we have $\grg\in\mN[\gra_1,\dots,\gra_{\ell-1}]$. So there exists a vector in $Z_n^*$ of weight $-\mu\in\Lambda_K$ such that $\mu\equiv \tilde \lambda$, moreover $\tom_{\alpha_0}-\mu=\gamma+a\alpha_0+b\beta_0$ with $a+b=n$. Since $\alpha_\ell$ does not appear in $\tom_{\alpha_0}-\mu$, $-\mu$ is a weight of a non\rev{-}zero vector of the module $U$ and it is non positive against $\alpha_1\cech, \alpha_2\cech, \dots, \alpha_{\ell-1}\cech$.

Finally notice that the semisimple part of $L_K$ is a group of type $\sfD_{\ell+1}$ and $U$ is a spin module. This is a minuscule module, so $\mu$ must be in the orbit of $\tom_{\alpha_0}$ for the Weyl group of $L_K$. Since $\mu$ is dominant with respect to $\alpha_1\cech, \alpha_2\cech, \dots, \alpha_{\ell-1}\cech$, a simple computation shows that it must be equivalent to one of the following weights w.r.t.\ the relation $\equiv$
$$
\tom_{\alpha_0}, \tom_1-\tom_{\alpha_0}, \tom_2-\tom_{\beta_0}, \tom_3-\tom_{\alpha_0},\tom_4-\tom_{\beta_0},\ldots
$$ 
The condition $a+b=n$ gives $n\leq \ell$ and either $\lambda=0$ if $n=0$, or $\lambda=\omega_n$ if $0<n\leq \ell$. Further notice that $\mu=w_n\tom_{\alpha_0}$, so the module $M$ is the $G$--module spanned by $\mF(\gre_n)$. This proves $i)$.

We now prove $ii)$ and $iii)$. Let $V_\grl\subset \goK_{-a\gra_0-b\grb_0}$ with $\grado(\grl)=a+b$.  Then there exists a root $\grd\in\Phi_K$ of the form $a\gra_0+b\grb_0+\gamma$ with $\grg\in \mN[\Delta]$ and $\gamma = -\grl + (a+b)\gre_1$. From $\grado(\grl)=a+b$ we see that the root $\gra_\ell$ does not appear in $\gamma$. In particular $\grd$ is a root of a root subsystem of type $\sfD$ (notice however that this system is not numbered in the usual way). We immediately deduce that $a\leq 1$ and $b\leq 1$. The case $a=0$ or $b=0$ is setted by Lemma \ref{lem:K} point $iv)$. We now study the case $a=b=1$. We notice that the pairing of $\gamma$ with $\gra_2\cech,\dots,\gra_\ell\cech$ must be non positive. From the explicit description of a positive root in a root system of type $\sfD$ we immediately deduce that $\gamma=\gra_1$ and $\grl=\gre_2$.
\end{proof}

\subsection{Construction of $\grf$}

The strategy we use is a generalization of the one we adopted in \cite{CLM}. Let $x_0 \in \calF$ be the element fixed by $B_K$. By Lemma \ref{lem:K} there exists $z_1\in \goK_{-\gra_0}$ and $z_2 \in V_{\gre_2}\subset \goK_{-\gra_0-\grb_0}$ invariant under $H$. We define $z\rev{\doteq}z_1+z_2$. The idea is to consider the element $\exp(z)\cdot x_0$; since this element is $H$--invariant the action of $G$ determines an embedding of $G/H$ in $\calF$ and $\grf$ is defined as the pull--back with respect to this map.

In order to verify all the required hypotheses we need to check some properties. In particular in Kumar's construction of the group $K$ in \cite{Kumar}, the element $\exp(z)$ does not \rev{exist}, and we have to go through a technical detour which is completely analogous to the case of symmetric varieties. For this reason we illustrate here the main steps and we refer to \cite{CLM} Section 5.1 and 5.2 for the details.

As above let $Z$ be the irreducible highest weight module of weight $\tom_0$. Recall that $Z$ is the restricted dual of $Z^*=\Gamma(\calF,\calL)$, and set $Z_{-n} \doteq Z_n^*$.  Let $v_0$ be a highest weight vector of $Z$ and let $J_{n}\doteq\bigoplus_{m\leq n}Z_m$ and $f_0$ a lowest weight vector in $Z^*$ such that $\langle v_0,f_0\rangle=1$. \rev{Since the} subspace $J_m\subset Z$ \rev{has} finite codimension, the element $x_m\doteq e^z \bar v_0 \in Z/J_m$ is well defined and is $H$--invariant.  We can define a map $\mi_m:G/H\lra \mP(Z/J_m)$ by $\mi_m(gH)\doteq g\,x_m$.  Since $e^z\cdot \bar v_0$ is $H$--invariant, $\mi_m^*(\calO_{\mP(Z/J_m)}(1))$ is the trivial bundle on $G/H$. Hence it defines a map $\mi^*_m$ from the annhilator $J_m^0$ of $J_m$ in $Z^*$ to $\mk[G/H]$. We can normalize this map by requiring that $\mi^*_m(f_0)$ is the constant function $1$. We notice that if $m<n\leq 0$ then $\mi^*_m\ristretto_{J_n^0}=\mi_n^*$ and this allows to define a map $\mi^*:Z^*\lra \mk[G/H]$. Finally, as proved in Lemma~43 \cite{CLM}, we see that the symmetric power of $\mi^*$ from $\sfS(Z^*)$ to $\mk[G/H]$ factor through a map $\grf:\Gamma_{\calF}\lra \mk[G/H]$. Hence \hypThree\ is satisfied.

\subsection{Verification of \hypFour}

By the construction of $\calR$ and by \hypSix, for $i=1,\dots,\ell$, there exists a unique $G$--submodule of $Z_{-i}$ isomorphic to $V_{\gre_i}$. We denote this module by $M_{-i}$ and we denote by $q_i:Z_{-i}\lra M_{-i}$ the projection onto this factor. 

\begin{lem}\label{lem:zv}
Let $a\doteq\sum a_i$ and $b\doteq\sum b_i$\rev{,} then
\begin{enumerate}[\indent i)]
\item $q_{n}(z_1^{a_1}z_2^{b_1}\cdots z_1^{a_m}z_2^{b_m}\cdot v_0)=q_{n}(z_1^az_2^b\cdot v_0)$;
\item $q_{2i}(z_1^a\,z_2^b\cdot v_0) = 0$ for $a\neq 0$ or $b\neq i$; 
\item $q_{2i}(z_2^i\cdot v_0) \neq 0$;
\item $q_{2i+1}(z_1^a\,z_2^b\cdot v_0) = 0$ for $a\neq 1$ or $b\neq i$; 
\item $q_{2i+1}(z_1\,z_2^i\cdot v_0) \neq 0$. 
\end{enumerate}
\end{lem}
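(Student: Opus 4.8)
The plan is to work entirely inside the integrable lowest weight module $Z$ (or, dually, in $Z^*$), exploiting the weight-grading with respect to the central torus $C$ and the $G$-module structure described in Lemma \ref{lem:K}. First I would record the basic grading bookkeeping: $v_0$ has $C$-weight $0$ (it lies in $Z_0$), $z_1\in\goK_{-\gra_0}$ shifts the $C$-weight by $-\gra_0$, and $z_2\in V_{\gre_2}\subset\goK_{-\gra_0-\grb_0}$ shifts it by $-\gra_0-\grb_0$. Hence $z_1^{a}z_2^{b}\cdot v_0$ lies in $Z_{-(a+b)\gra_0-b\grb_0}$, and after projecting to $Z_{-n}=\bigoplus_{ht_0(\eta)=n}Z_\eta^*$ only terms with total height $n$ survive; the target $M_{-n}\isocan V_{\gre_n}$ sits inside $Z_{-n}$. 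For part (i), the point is that all the operators $z_1,z_2$ commute up to lower-order corrections: $z_1\in\goK_{-\gra_0}$ and $z_2\in\goK_{-\gra_0-\grb_0}$, so $[z_1,z_2]\in\goK_{-2\gra_0-\grb_0}$, $[z_1,z_1']$-type brackets land in $\goK_{-2\gra_0}$, etc. Reordering a word $z_1^{a_1}z_2^{b_1}\cdots z_1^{a_m}z_2^{b_m}$ into $z_1^a z_2^b$ produces, besides the ordered term, commutator corrections that lie in $C$-eigenspaces $\goK_\eta$ with $\eta$ a combination of $\gra_0,\grb_0$ in which $\gra_0$ appears with coefficient $>a$ (while the sum of the two coefficients is still $a+b$). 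Applying such a correction to $v_0$ and then projecting with $q_n$ gives $0$, because $V_{\gre_n}$ has $C$-weight pinned to the specific pair $(a+b,b)$ when $n=a+b$ — or more precisely because the relevant $C$-eigenspace component of $q_n$'s source must be $Z_{-n}$ with the \emph{particular} $(\gra_0,\grb_0)$-bidegree dictated by $w_n(\tom_{\gra_0})$, and the correction terms live in a different bidegree. This is exactly the kind of argument used in \cite{CLM}, Section 5.2.

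Next, for parts (ii)–(v), I would combine the $C$-bidegree constraint with a $G$-highest-weight argument. For (ii): $q_{2i}$ projects onto $M_{-2i}\isocan V_{\gre_{2i}}$, and by the construction of $\calR$ (the length/weight data $w_{2i}(\tom_{\gra_0})\equiv\tilde\gre_{2i}-\tom_{\grb_0}$) the module $M_{-2i}$ lives in the $C$-eigenspace $Z_{-(i)\gra_0-(i)\grb_0}$... more carefully, in the eigenspace whose $(\gra_0,\grb_0)$-bidegree matches $\tom_{\gra_0}-w_{2i}(\tom_{\gra_0})$. On the other hand $z_1^a z_2^b\cdot v_0$ sits in bidegree $(a+b,b)$. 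Matching bidegrees forces $a+b=b$ (so $a=0$) and $b=i$; any other $(a,b)$ gives a vector in a $C$-eigenspace disjoint from the one containing $M_{-2i}$, hence $q_{2i}$ kills it. The same bidegree comparison handles (iv): $q_{2i+1}$ projects onto $M_{-(2i+1)}$ with $w_{2i+1}(\tom_{\gra_0})\equiv\tilde\gre_{2i+1}-\tom_{\gra_0}$, giving bidegree $(i+1,i)$, and $z_1^a z_2^b\cdot v_0$ has bidegree $(a+b,b)$, forcing $a+b=i+1$, $b=i$, i.e. $a=1$, $b=i$.

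The substantive points are (iii) and (v), the non-vanishing statements, and these I expect to be the main obstacle. Here I would argue as follows. Dually, $q_{2i}(z_2^i\cdot v_0)\neq 0$ is equivalent to asserting that pairing $z_2^i\cdot v_0$ against the lowest weight vector of the submodule $V_{\gre_{2i}}^*\subset Z^*$ is nonzero — equivalently, that the image of $z_2^i\cdot v_0$ under the $G$-equivariant projection $Z\to M_{-2i}$ is the $H$-invariant line. Now $z_2$ is, by construction, the $H$-invariant vector $h_2\in V_{\gre_2}$, and iterating $z_2$ on $v_0$ and projecting should reproduce, via the $G$-equivariant projection $\sfS^i(V_{\gre_2})\to V_{\gre_{2i}}$, the element $h_2^{\wedge i}=h_{2i}$ described in Subsection \ref{ssec:modello}, which is nonzero by the explicit case-by-case description there. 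Concretely, I would set up a commutative diagram relating the $C$-graded pieces of $Z$ near the top: the multiplication $Z_{-2}^{\,\mathrm{top}}\times\cdots$ reflects the ring structure of $\grG_\calF$, and under $\grf$ (or rather its dual, the embedding $\exp(z)\cdot x_0$) the leading term of $z_2^i\cdot v_0$ corresponds to the product $h_2\cdots h_2$ in the coordinate ring $A$ of $G/H$, which lands in the module $V_{\gre_{2i}}^*$ precisely because of property \modTwo\ (iterated) and is nonzero because $h_{2i}\neq 0$. The same reasoning with an extra factor of $z_1\leftrightarrow h_1=v$ gives (v), using $h_{2i+1}=h_1\wedge h_{2i}\neq 0$. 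The delicate part is making the identification ``leading term of $z_2^i v_0$ $\longleftrightarrow$ $h_2^{\wedge i}$'' rigorous — this requires tracking how the $C$-top component of $Z_{-2i}$ is a quotient (or sub) of $\sfS^i(Z_{-2})$-type data and matching the relevant $G$-equivariant projections with the wedge-product projections $\pi_{2i}$ of Subsection \ref{ssec:modello}. I would reduce (iii) and (v) to this single computation, carried out uniformly but checked in each of the four families using the explicit forms of $h_i$, exactly as in the corresponding argument of \cite{CLM}, Section 5.
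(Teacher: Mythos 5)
Your parts (ii) and (iv) are correct and coincide with the paper's argument: the $C$--bidegree of $z_1^az_2^b\cdot v_0$ is $(a+b)\gra_0+b\grb_0$, while $M_{-2i}\subset Z_{-i\gra_0-i\grb_0}$ and $M_{-2i-1}\subset Z_{-(i+1)\gra_0-i\grb_0}$, so $q_n$ kills every wrong $(a,b)$. But your argument for (i) has a genuine gap: the commutator corrections do \emph{not} live in a different bidegree. The grading of $\goK$ (and of $Z$) by $\mN[\Delta_0]$ is a Lie algebra (resp.\ module) grading, so $[z_1,z_2]\in\goK_{-2\gra_0-\grb_0}$ carries exactly the sum of the bidegrees of $z_1$ and $z_2$; a word in which some pair $z_1z_2$ is replaced by $[z_1,z_2]$, applied to $v_0$, lies in precisely the same $C$--eigenspace $Z_{-(a+b)\gra_0-b\grb_0}$ as the reordered word $z_1^az_2^b\cdot v_0$, and $q_n$ has no reason to annihilate it on weight grounds. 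The paper disposes of these correction terms by a representation--theoretic argument instead: the evaluation map $\goK_{-\gra_0}^{\otimes a_1}\otimes\cdots\otimes\goK_{-2\gra_0-\grb_0}\otimes\cdots\otimes\goK_{-\gra_0-\grb_0}^{\otimes b_m}\lra M_{-n}$, $x_1\otimes\cdots\otimes x_r\mapsto q_n(x_1\cdots x_r\cdot v_0)$, is $G$--equivariant, and by Lemma \ref{lem:IP6} iii) the factor $\goK_{-2\gra_0-\grb_0}$ contains no spherical constituent of $\grado\geq 3$; together with \hypFive\ this shows that $V_{\gre_n}$, whose $\grado$ is exactly $n$, does not occur in that tensor product, so the map is zero. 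Some such appeal to \hypFive\ and Lemma \ref{lem:IP6} is unavoidable here; bidegree bookkeeping alone cannot separate the ordered word from its commutator corrections.

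For (iii) and (v) you reduce, as the paper does, to property \modTwo\ and the explicit nonvanishing of $h_{2i}$, $h_{2i+1}$, but your sketch omits the step that carries the real weight: one must show that the map $N\otimes M_{-n+2}\lra M_{-n}$, $x\otimes v\mapsto q_n(x\cdot v)$ (with $N\isocan V_{\gre_2}\subset\goK_{-\gra_0-\grb_0}$), is a \emph{nonzero} multiple of the canonical projection. The paper obtains nonvanishing from a surjectivity argument --- using that $Z$ is generated by $v_0$, eliminating all other tensor factors again via \hypFive, Lemma \ref{lem:IP6} iii) and $\langle\grb_0\cech,\tom_{\gra_0}\rangle=0$ --- and identifies the map with the projection by the multiplicity--one statement \modOne; only then does \modTwo\ give $r_n(h_2\otimes h_{n-2})\neq 0$, and an induction with base case $i=1$ (where $\langle\gra_0\cech,\tom_{\gra_0}\rangle\neq 0$ and $\langle(\gra_0+\gra_1+\grb_0)\cech,\tom_{\gra_0}\rangle\neq 0$) closes the proof. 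Your proposed detour through the point $\exp(z)\cdot x_0$ and the ring $A$ is not circular (the lemma is used only afterwards, for \hypFour), but as stated it presupposes exactly the nonvanishing of this structure map (``the leading term of $z_2^i\cdot v_0$ corresponds to $h_2\cdots h_2$''), so it does not yet close the argument; you would need to supply the surjectivity and multiplicity--one inputs in some form.
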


\begin{proof}
We decompose $Z$ according to the action of $C$ as we did for the Lie algebra $\goK$: so $Z_{-a\gra_0-b\grb_0}$ is the span of the $T_K$--eigenvectors of weight of the form $\tom_0-a\gra_0-b\grb_0-\grg$ with $\grg \in \mN[\Delta]$. We have $Z_{-n}=\bigoplus_{a+b=n}Z_{-a\gra_0-b\grb_0}$.  Further $M_{-2i}\subset Z_{-i\gra_0-i\grb_0} \quad$ and $\quad M_{-2i-i}\subset Z_{-(i+1)\gra_0-i\grb_0}$
In particular $q_{2i}(z_1^{a_1}z_2^{b_1}\cdots z_1^{a_m}z_2^{b_m}\cdot v_0)=0$ if $(a,b)\neq (0,i)$ and  $q_{2i+1}(z_1^{a_1}z_2^{b_1}\cdots z_1^{a_m}z_2^{b_m}\cdot v_0)=0$ if $(a,b)\neq (1,i)$. This implies $ii)$ and $iv)$.

By the previous remark, $i)$ is proved if we show that
$$
q_{n}(z_1^{a_1}z_2^{b_1}\cdots [z_1,z_2]\cdots z_1^{a_m}z_2^{b_m}\cdot v_0)=0
$$
under the assumption $\sum a_i +2\sum b_i +3 =n$.  If this is not the case we would have a non\rev{-}zero, hence surjective, $G$--equivariant map 
$$
\goK_{-\gra_0}^{\otimes a_1}\otimes\goK_{-\gra_0-\grb_0}^{\otimes b_1}\otimes \cdots
\goK_{-2\gra_0-\grb_0}\otimes\cdots \goK_{-\gra_0-\grb_0}^{\otimes b_m} \lra M_{-n}
$$
given by
$$
x_1\otimes x_2 \cdots \otimes x_r \longmapsto q_n(x_1\cdot x_2 \cdots v_0).
$$
However by Lemma \ref{lem:IP6} $iii)$ and \hypFive, $V_{\gre_n}$ is not a submodule of $ \goK_{-\gra_0}^{\otimes a_1}\otimes\goK_{-\gra_0-\grb_0}^{\otimes b_1}\otimes \cdots \goK_{-2\gra_0-\grb_0}\otimes\cdots \goK_{-\gra_0-\grb_0}^{\otimes b_m}$ and we obtain a contradiction.

We now prove $iii)$ and $v)$. We order the weights in $\mN[\Delta]$ with a complete order $>_{\Phi}$ in such a way that $\gra_0+\grb_0>_\Phi\gra_0>_\Phi\grb_0$.  For any decreasing sequence of weights $\eta_1>_\Phi \eta_2 >_\Phi \cdots >_\Phi\eta_r$ we can consider the $G$--equivariant map
$$
\goK_{-\eta_1}\otimes \goK_{-\eta_2}\otimes \cdots \goK_{-\eta_r}\lra M_{-n} 
$$
given by
$$
x_1\otimes x_2 \cdots \otimes x_r \longmapsto q_n(x_1\cdot x_2 \cdots x_r\cdot v_0).
$$
By \hypFive, and Lemma \ref{lem:IP6} $iii)$ and by $\langle\grb_0\cech,\tom_{\gra_0}\rangle = 0$, this map is zero \rev{unless} $\eta_i\rev{\in} \{\gra_0,\gra_0+\grb_0\}$ for some $i$. Hence, using that $Z$ is generated by $v_0$, the maps 
$$
\goK_{-\gra_0-\grb_0}^{\otimes i}\otimes\goK_{-\gra_{\rev{0}}}  \lra M_{-2i-1}
\quad
\mand
\quad
\goK_{-\gra_0-\grb_0}^{\otimes i} \lra M_{-2i}
$$
defined as above are surjective. This implies, in turn, that the map
$$
\rev{\goK}_{-\gra_0-\grb_0} \otimes Z_{-n+2}\lra M_{-n} \; \text{ given by } \; x\otimes v \mapsto q_n(x\cdot v) 
$$
is surjective. Denote by $N$ the unique $G$--submodule of $\rev{\goK}_{-\gra_0-\grb_0}$ isomorphic to $V_{\gre_2}$ and let $r_n:N\otimes M_{-n+2}\lra M_{-n}$ be the restriction to $N\otimes M_{-n+2}$ of the last map above.  By \hypSix, Lemma \ref{lem:IP6} $iii)$ and \hypFive, $M_{-n}$ is in the image of $r_n$. Further by property \modOne\ we know that $V_{\gre_n}$ appears with multiplicity at most one in $V_{\gre_2}\otimes V_{\gre_{n-2}}$. Hence $r_n$ is the projection onto the \rev{isotypical} component of type $V_{\gre_n}$ of $V_{\gre_2}\otimes V_{\gre_{-n+2}}$ and it is uniquely determined up to a non\rev{-}zero scalar factor.

We now proceed by induction on $i$. For $i=1$ the statements are clear since $\langle\gra_0\cech,\tom_{\gra_0}\rangle\neq 0 $ and $\langle(\gra_0+\gra_1+\grb_0)\cech,\tom_{\gra_0}\rangle\neq 0 $. Let $n$ be either $2i$ in case $iii)$ or $2i+1$ in case $v)$. If $n>3$ we need to prove that $r_n(h_2\cdot h_{-n+2} )\neq 0$ where $h_i$ is a non\rev{-}zero $H$--invariant vector in $V_{\gre_i}$. Now as already noticed $r_n(h_2\cdot h_{-n+2} )\neq 0$ is equivalent to property \modTwo.
\end{proof}

We can finally verify \hypFour.

\begin{lem}
\hypFour\ is satisfied.
\end{lem}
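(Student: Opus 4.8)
The plan is to deduce \hypFour\ from Lemma~\ref{lem:zv} by a Schur's lemma argument. First I would observe that $V=V_1\oplus\cdots\oplus V_\ell$ with $V_h\isocan V^*_{\gre_h}$, and that these summands are pairwise non-isomorphic $G$--modules because the $\gre_h$ are distinct. Since every irreducible occurs in $A$ with multiplicity at most one, the $G$--equivariant map $\varphi_1$ is the direct sum of its restrictions $\varphi_1\ristretto_{V_h}$, and each such restriction is either $0$ or an isomorphism onto the unique copy of $V^*_{\gre_h}$ in $A$; this copy lies in $A_1$ because $\gre_h$ is one of the chosen generators of $\Omega^+_A$. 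Hence it suffices to show $\varphi_1\ristretto_{V_h}\neq 0$ for $h=1,\dots,\ell$, for then $\varphi_1$ carries $V$ isomorphically onto $\bigoplus_{h=1}^\ell V^*_{\gre_h}=A_1$.

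Next I would identify $V_h$ with the module $M_{-h}\subset Z_{-h}$ used in Lemma~\ref{lem:zv}. The $T_K$--weights occurring in $Z_{-h}$ and in $Z^*_h$ are negatives of one another, so the pairing between $Z$ and $Z^*$ restricts to a perfect pairing $Z_{-h}\times Z^*_h\to\mk$; dualizing the $G$--decomposition $Z_{-h}=M_{-h}\oplus M'_{-h}$ therefore presents $Z^*_h$ as $M^*_{-h}\oplus(M'_{-h})^*$. Since by \hypSix\ the submodule $V_h$ is the unique $G$--submodule of $Z^*_h$ isomorphic to $V^*_{\gre_h}$, it must coincide with $M^*_{-h}$, that is, with the annihilator of $M'_{-h}$ in $Z^*_h$. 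Recalling that $\varphi_1$ is the map $\mi^*$ of the construction of $\grf$, so that $\varphi_1(f)$ is the function $gH\mapsto f(g\cdot e^z v_0)$, and that $G$ commutes with $C$ and hence preserves the $C$--grading of $Z$ as well as each of $M_{-h}$, $M'_{-h}$, I obtain for $f\in V_h$
$$
\varphi_1(f)(gH)=f\bigl(g\cdot(e^z v_0)_{-h}\bigr)=f\bigl(g\cdot q_h(e^z v_0)\bigr),
$$
where $(e^z v_0)_{-h}\in Z_{-h}$ is the $Z_{-h}$--component of $e^z v_0$ and $q_h$ its projection onto $M_{-h}$.

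Finally I would invoke Lemma~\ref{lem:zv} to conclude. Expanding $e^z v_0$ and extracting the $Z_{-h}$--component, parts~ii) and~iv) of that lemma annihilate under $q_h$ every monomial in $z_1,z_2$ applied to $v_0$ except $z_2^{h/2}\,v_0$ when $h$ is even and $z_1\,z_2^{(h-1)/2}\,v_0$ when $h$ is odd; by part~i) all of the surviving terms have the same image under $q_h$, so $q_h(e^z v_0)$ is a nonzero multiple of $q_h(z_2^{h/2}v_0)$, respectively of $q_h(z_1\,z_2^{(h-1)/2}v_0)$, which is nonzero by parts~iii) and~v). Since $M_{-h}$ is $G$--irreducible, the $G$--translates of the nonzero vector $q_h(e^z v_0)$ span $M_{-h}$; as the pairing of $V_h=M^*_{-h}$ with $M_{-h}$ is non-degenerate, no nonzero $f\in V_h$ can annihilate all these translates, so $\varphi_1(f)\neq 0$. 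This gives $\varphi_1\ristretto_{V_h}\neq 0$ for every $h$ and finishes the proof.

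I expect the hard part to be already behind us: the substantive input is entirely Lemma~\ref{lem:zv} (which in turn rests on \modOne, \modTwo, \hypFive, \hypSix\ and the description of $\goK^-$ in Lemma~\ref{lem:K}), while the present lemma is the bookkeeping that ties the submodule $V_h\subset Z^*$ to $M_{-h}\subset Z$ through the pairing and through the explicit shape $\varphi_1=\mi^*$. The one point that needs a little care is that $\varphi_1$ is a priori only the degree--one part of an algebra map $\grG_\calF\to A$, with no automatic reason for its image to lie in $A_1$; this is handled in the reduction step by appealing to multiplicity one in $A$.
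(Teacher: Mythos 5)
Your proof is correct and follows essentially the same route as the paper: both arguments reduce \hypFour\ to the non-vanishing of $q_h(e^z\cdot \bar v_0)$ for $h=1,\dots,\ell$, which is exactly what Lemma \ref{lem:zv} provides. The only difference is bookkeeping: the paper phrases the reduction through the spans $B_n$ of $\mF_0(\gre_n)$ and the inclusion $s$, while you pair $V_h$ directly with the unique copy $M_{-h}\subset Z_{-h}$ and invoke Schur's lemma together with multiplicity one in $A$ --- a slightly more explicit packaging of the same argument.
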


\begin{proof}
Let $B_n$ be the span of $\mF_{\rev{0}}(\gre_n)$. As seen in Section \ref{ssec:compatibilita} $\rev{s}$ gives a $G$--equivariant isomorphism between $\Gamma(\calR,\calL)$ and $\bigoplus_{n=1}^\ell B_n$, moreover $B_n\isocan V^*_{\gre_n}$ as a $G$--module. Hence to prove that $\grf_1$ is an isomorphism it is enough to prove that $\mi^*(B_n)\neq 0$. This is equivalent to $q_i(e^z\cdot \bar v_0) \neq 0$ and this follows by the previous Lemma.
\end{proof}

\section{Another class of spherical varieties}\label{sec:another}

In this section we apply our method to another class of spherical varieties, listed as $(15)$ in the paper \cite{Bravi} page $656$. Let $2\leq p\leq n-2$, $V\doteq U\oplus U^*\oplus W\oplus\mC v$ with $U$ a vector space of dimension $p$, $W$ a vector space of dimension $2n-2p$. Let $B$ be a symmetric non degenerate bilinear form on $V$ such that $(U\oplus U^*)\oplus W\oplus \mC v$ is an orthogonal decomposition of $V$ and $B$ restricted to $U\oplus U^*$ is the natural symmetric bilinear form on this vector space.

For this example $\bar G=\SO(V,B)\simeq\SO(2n+1)$, further $\bar H$ is the subgroup of the elements $g\in \bar G$ such that $g(U)=U$ and $g(v)=v$. For this spherical variety $\Omega_A^+$ is the free monoid generated by the weights $\gre_1=\omega_1$, $\gre_2=\omega_p$ and $\gre_3=\omega_{p+1}$. The $\bar H$--invariants $h_i\in V_{\gre_i}$, $i=1,2,3$, are described as follows: $h_1=v$, $h_2$ is a non\rev{-}zero vector of $\Lambda^p(U)\subset\Lambda^p(V)$ and $h_3=h_1\wedge h_2$. The following Lemma can be easily checked.

\begin{lem} For the spherical variety $\bar G/\bar H$ we have:
\begin{enumerate}
\item[\sphOne:] $V_{\gre_2}\subset V_{\gre_1}\otimes V_{\omega_{p-1}}$ with multiplicity $1$ and $V_{\gre_3}\subset V_{\gre_2}\otimes V_{\gre_1}$ with multiplicity $1$;
\item[\sphTwo:] $V_{\gre_3}\subset V_{\gre_1}\cdot V_{\gre_2}$ in the coordinate ring of $\bar G/\bar H$;
\item[\sphThree:] $u_1=s_{p-1}s_{p-2}\cdots s_2s_1$, $u_2=s_1s_2\cdots s_{p-1}s_p$ are such that $u_1\gre_1=\gre_2-\omega_{p-1}$, $u_2\gre_2=\gre_3-\gre_1$.
\end{enumerate}
\end{lem}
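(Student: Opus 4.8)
The plan is to reduce the three assertions to elementary facts about the root system of type $\sfB_n$ and the exterior powers of the vector representation $V$ of $\bar G\isocan\SO(2n+1)$. Throughout one uses that, since $2\leq p\leq n-2$, the integers $p-1$, $p$, $p+1$ all lie in $\{1,\dots,n-1\}$, so that as $\bar G$--modules $V_{\gre_1}=V$, $V_{\omega_{p-1}}=\Lambda^{p-1}V$, $V_{\gre_2}=V_{\omega_p}=\Lambda^pV$ and $V_{\gre_3}=V_{\omega_{p+1}}=\Lambda^{p+1}V$ (in particular these exterior powers are irreducible, being outside the spin range). Property \sphThree\ is then a one line computation in the standard $\epsilon$--coordinates of $\sfB_n$: there $\gre_1=\epsilon_1$, $\omega_{p-1}=\epsilon_1+\dots+\epsilon_{p-1}$, $\gre_2=\epsilon_1+\dots+\epsilon_p$, $\gre_3=\epsilon_1+\dots+\epsilon_{p+1}$, and $s_i$ (for $i<n$) transposes $\epsilon_i$ and $\epsilon_{i+1}$, whence $u_1\gre_1=s_{p-1}\cdots s_1(\epsilon_1)=\epsilon_p=\gre_2-\omega_{p-1}$ and $u_2\gre_2=s_1\cdots s_p(\epsilon_1+\dots+\epsilon_p)=\epsilon_2+\dots+\epsilon_{p+1}=\gre_3-\gre_1$.

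For \sphOne\ I would invoke the multiplicity--free decomposition of the tensor product of the vector representation with a fundamental one: for $1\leq k\leq n-2$ one has $V\otimes\Lambda^kV\isocan\Lambda^{k+1}V\oplus\Lambda^{k-1}V\oplus V_{\omega_1+\omega_k}$ as $\bar G$--modules, the first summand realized by the wedge map, the second by contraction against $B$, and the three summands pairwise non--isomorphic since $\omega_{k+1}$, $\omega_{k-1}$ and $\omega_1+\omega_k$ are distinct dominant weights. Taking $k=p-1$ shows that $V_{\gre_2}=\Lambda^pV$ occurs with multiplicity one in $V_{\gre_1}\otimes V_{\omega_{p-1}}$, and taking $k=p$ shows that $V_{\gre_3}=\Lambda^{p+1}V$ occurs with multiplicity one in $V_{\gre_2}\otimes V_{\gre_1}$. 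Alternatively both statements follow from \sphThree\ exactly as in the proof of Lemma~41 of \cite{CLM}, since the identities of \sphThree\ exhibit the relevant highest weight ($\gre_2=\omega_{p-1}+u_1\gre_1$, respectively $\gre_3=\gre_1+u_2\gre_2$) as a dominant weight plus an extremal weight of the other tensor factor.

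For \sphTwo\ I would use the criterion recalled in the proof of Lemma~\ref{lem:Model123}: for $\lambda,\mu,\nu\in\Omega_A^+$ the inclusion $V_\nu^*\subset V_\lambda^*\cdot V_\mu^*$ holds in the coordinate ring of $\bar G/\bar H$ if and only if the projection onto the $V_\nu$--isotypical component of $V_\lambda\otimes V_\mu$ does not annihilate $h_\lambda\otimes h_\mu$, where $h_\lambda$ spans the line $V_\lambda^{\bar H}$. Applying this with $\lambda=\gre_1$, $\mu=\gre_2$, $\nu=\gre_3$: by \sphOne\ the $V_{\gre_3}=\Lambda^{p+1}V$--component of $V\otimes\Lambda^pV$ has multiplicity one, hence that projection is, up to a nonzero scalar, the wedge map $x\otimes y\mapsto x\wedge y$; therefore it sends $h_1\otimes h_2$ to $v\wedge h_2=h_3$, which is nonzero because $h_2$ is a nonzero element of $\Lambda^pU$ and $v\notin U$. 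This gives \sphTwo\ (read, as with \symTwo\ and \modTwo, with the appropriate duals, since $A=\bigoplus_\lambda V_\lambda^*$).

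There is no genuine obstacle here --- this is the ``easily checked'' Lemma announced just before its statement --- the only points demanding a little care being the bookkeeping that keeps $2\leq p\leq n-2$ in force, so that $\omega_{p-1},\omega_p,\omega_{p+1}$ are honest fundamental weights with $V_{\omega_k}=\Lambda^kV$ and the exterior powers stay irreducible, together with the correct identification in \sphTwo\ of the multiplicity--one isotypical projection with the wedge map, so that the image of $h_1\otimes h_2$ is precisely the $\bar H$--invariant $h_3=v\wedge h_2$.
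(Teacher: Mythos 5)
Your proposal is correct and follows the route the paper intends: the paper leaves this lemma as ``easily checked'' and your verification is exactly the analogue of its proof of Lemma \ref{lem:Model123} for model varieties --- \sphThree\ by direct computation in $\epsilon$--coordinates, \sphOne\ from the multiplicity--free decomposition $V\otimes\Lambda^kV\isocan\Lambda^{k+1}V\oplus\Lambda^{k-1}V\oplus V_{\omega_1+\omega_k}$ (or from \sphThree\ as in Lemma~41 of \cite{CLM}), and \sphTwo\ via the criterion $p(V_\lambda^H\otimes V_\mu^H)\neq 0$ with the multiplicity--one projection identified with the wedge map, so that $p(h_1\otimes h_2)=v\wedge h_2=h_3\neq 0$. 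Your bookkeeping with $2\leq p\leq n-2$, ensuring $V_{\omega_k}=\Lambda^kV$ for $k=p-1,p,p+1$, is exactly the point that keeps the argument elementary, so nothing is missing.
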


We now describe $K$ and $\calR$. $K$ is the Kac--Moody group whose Dynkin diagram is obtained from the Dynkin diagram of $G$ by adding two nodes: a node $\alpha_0$ connected with $\alpha_1$ and a node $\beta_0$ connected to $\alpha_{p-1}$. Notice that also in this case $K$ is symmetrizable.  With the same notation of the previous \rev{S}ection we have the following results analogue of Lemma \ref{lem:K}:
\begin{enumerate}
\item[i)] $\goK_{-\gra_0}\isocan V_{\gre_1}$ as a $\gog$--module;
\item[ii)] $\goK_{-\grb_0}\isocan V_{\omega_{p-1}}$  as a $\gog$--module;
\item[iii)] $\goK_{-\gra_0-\grb_0}$ as a $\gog$--module and it contains $V_{\gre_2}$ with multiplicity one.
\end{enumerate}

Let $Q$ be the standard maximal parabolic subgroup of $K$ relative to the root $\alpha_0$, $\calF \rev{\doteq} K/Q$. The ridge Richardson variety $\calR$ is defined by $w_0 \rev{\doteq} \id$, $w_1 \rev{\doteq} s_{\alpha_0}$, $w_2 \rev{\doteq} s_{\beta_0} u_1 w_1$, $w_3 \rev{\doteq} s_{\alpha_0}u_2w_2$.

The construction of $\grf$ and the verification of the hypotheses \hypOne,\hypTwo,...,\hypSix\ are completely similar to the case of model varieties.

We have included here this example since we believe that our theory may be applied to other classes of spherical varieties in a similar way.

\bibliographystyle{plain}

\def\cprime{$'$}

\end{document}